\documentclass[12pt]{amsart}

\usepackage{amscd}
\usepackage{amssymb}
\usepackage{tikz-cd}
\usepackage{hyperref}
\newtheorem{Theorem}{Theorem}[section]
\newtheorem{Lemma}[Theorem]{Lemma}
\newtheorem{Corollary}[Theorem]{Corollary}
\newtheorem{Proposition}[Theorem]{Proposition}
\theoremstyle{remark}
\newtheorem{Example}[Theorem]{Example}

\DeclareMathOperator{\ar}{ar}
\DeclareMathOperator{\gr}{gr}

\DeclareMathOperator{\rt}{rt}

\def\reg{\operatorname{reg}}

\def\To {\longrightarrow}
\def\sk{\smallskip\par}
\def\mm{{\frak m}}

\def\gr{{\mathrm{gr}}}
\def\ar{{\mathrm{ar}}}
\def\inn{{\mathrm{in}}}
\def\inm{{\mathrm{in}}}

\newcommand{\Jbar}{\overline J}
\newcommand{\Rbar}{\overline R}
\newcommand{\QNVT}{\mathrm{QNVT}}

\newcommand{\Rees}{\mathfrak{R}}

\usepackage{enumitem}

\def\reg{\operatorname{reg}}
\def\Ker{\operatorname{Ker}}
\def\Ext{\operatorname{Ext}}
\def\Hom{\operatorname{Hom}}

\def\Im{\operatorname{Im}}

\def\Ext{\operatorname{Ext}}

\def\max{\operatorname{max}}
\begin{document}
\title{Equivalence of Extensions and the Perturbation Index}

\author{Ton That Quoc Tan}
\address{Department of Mathematics, FPT University, Danang
, Viet Nam}
\email{tanttq@@fe.edu.vn}
\date{}
\subjclass[2020]{Primary 13A30, 13D45; Secondary 13H15, 14B12}
\keywords{small perturbation, filter regular sequence, initial ideal, associated graded ring, Artin-Rees number}

 \date{}

\maketitle
\begin{abstract}
In this paper, using the equivalence of two extensions, we obtain an improved upper bound for the perturbation index, improving the bounds of Quy and N. V. Trung and of Tan.
\end{abstract}

\section{Introduction}
Let $(R,\mm)$ be a Noetherian local ring, $J$ an ideal of $R$ and
$I=(f_1,\ldots,f_r)$. For a positive integer $N$, we consider ideals of the form
$
I'=(f_1+\varepsilon_1,\ldots,f_r+\varepsilon_r),
$
where $\varepsilon_i\in J^N$ for all $i=1,\ldots,r$. Such an ideal $I'$ is called
a \emph{$J$-adic perturbation} of $I$. In the case $J=\mm$, it is also called a \emph{small
perturbation} of $I$.

The main problem considered in this paper is the stability of the associated
graded ring
$$
\gr_J(R/I)=\bigoplus_{n\geq 0}\frac{J^n+I}{J^{n+1}+I}
$$
under $J$-adic perturbations of $I$. More precisely, we are
interested in finding an integer $N$ such that
$
\gr_J(R/I')\cong\gr_J(R/I)
$
whenever $\varepsilon_1,\ldots,\varepsilon_r\in J^N$. The smallest such integer
is called the \emph{perturbation index} of $\gr_J(R/I)$.

Perturbations of ideals and their invariants have been studied from several
points of view. Eisenbud \cite{Eis} considered adic approximation of complexes.
For Hilbert-Samuel functions, Srinivas and Trivedi \cite{ST} prove the stability of Hilbert funtion under small perturbation in the case $R$ is generalized Cohen-Macaulay ring and give a conjecture in arbitrary ring. This conjecture was
later proved by Ma, Quy and Smirnov \cite{MQS}. Quy and V. D. Trung
\cite{QVDT} established a linear bound for Hilbert perturbation index in terms of the homological degree in the generalized Cohen-Macaulay case. The corresponding
problem for Artinian modules and coregular sequences was studied by Tan
\cite{TanBKMS}. Other invariants have also been considered. Duarte
\cite{Duarte} investigated the stability of Betti numbers, and V. D. Trung
\cite{VDT} studied both Bass and Betti numbers under ideal perturbations.
 D. T. Cuong, L. T. Nhan and N. X. Linh \cite{CNL} investigated
E-depth, Ef-depth and the dimension of the non-sequentially Cohen--Macaulay
locus under perturbations of ideals. Recently, Tan \cite{Tanarxiv} give a slight improvement bound for the perturbation index in Quy-N. V. Trung \cite{QNVT}. 

The goal of this paper is to remove the factors $2^i$ appearing in the upper bound of Quy-Ngo Viet Trung \cite{QNVT} and Tan \cite{Tanarxiv}. The main result is the following.
\begin{Theorem}[Main theorem]\label{thm:intro-main}
Let $(R,\mm)$ be a Noetherian local ring and $J$ an arbitrary ideal of $R$  and let
$f_1,\ldots,f_r$ be a $J$-filter regular sequence.  For $i=1,\ldots,r$, let $$a_i =  a_J\!\left(
 \frac{(f_1,\ldots,f_{i-1}):f_i}{(f_1,\ldots,f_{i-1})}
 \right). $$ 
 Set 
$b:=\max_{1\leq i \leq r}\{a_i\}$ and
\begin{equation}\label{eq:intro-new-bound}
 N
 =\max\left\{
 a_1+\cdots+a_r+b,
 \ar(f_1),\ldots,\ar(f_1,\ldots,f_r)
 \right\}+1.
\end{equation}
For all $\varepsilon_1,\ldots,\varepsilon_r\in J^{N}$ and $f_i'=f_i+\varepsilon_i$ for $i=1,\ldots,r$. Then
\begin{enumerate}[label=\textup{(\roman*)}]
\item $f_1',\ldots,f_r'$ is a $J$-filter regular sequence with  $$a_J\left(\dfrac{(f_1',\ldots,f_{i-1}'):f_i')}{(f_1',\ldots,f_{i-1}')}\right) = a_i$$ for $i=1,\ldots,r$.
\item  For  $i=1,\ldots,r$, we have
$
 \inm(f_1',\ldots,f_i'))=\inm(f_1,\ldots,f_i).$

\item In particular, 
$
 \gr_J(R/I')\cong\gr_J(R/I)$ and  $\ar_J(I')=\ar_J(I).
$
\end{enumerate}
\end{Theorem}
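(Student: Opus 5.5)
The proof goes by induction on $r$, following the scheme of Quy--N.~V.~Trung and Tan. The new ingredient, replacing the doubling that produces the factors $2^i$, is an identification of the two extensions
\[
0\to \frac{(f_1,\dots,f_{i-1}):f_i}{(f_1,\dots,f_{i-1})}\to \frac{R}{(f_1,\dots,f_{i-1})}\xrightarrow{\,f_i\,}\frac{R}{(f_1,\dots,f_{i-1})}\to \frac{R}{(f_1,\dots,f_i)}\to 0
\]
and their primed analogues, regarded as classes in a Yoneda $\Ext^2$. Throughout, $a_J(M)$ denotes the largest $n$ with $H^0_J(M)$ (or $M$ itself, when it has finite length over $J$) nonzero in the relevant filtration degree. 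We use the characterization of a $J$-filter regular sequence as one for which $\frac{(f_1,\dots,f_{i-1}):f_i}{(f_1,\dots,f_{i-1})}$ is $J$-torsion. In that case the torsion module is annihilated by $J^{a_i+1}$ in the appropriate sense.

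\textbf{Step 1: the case $r=1$.} Write $f'=f+\varepsilon$ with $\varepsilon\in J^N$ and $N>\max\{2a_1,\ar(f_1)\}$. The choice $N>a_1+b$ gives $0:f'=0:f$ inside $H^0_J(R)$, by the standard argument: if $f'x=0$, then $fx=-\varepsilon x\in J^N x$. The Artin--Rees bound $N>\ar(f_1)$ then pushes $x$ into $0:f + J^{N-\ar}$, and the torsion part is killed by $J^{a_1+1}$. The same argument yields $(f')+J^n=(f)+J^n$ for all $n$, and $\inn(f')=\inn(f)$.

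\textbf{Step 2: the inductive step.} Suppose (i) and (ii) hold up to $i-1$. Set $Q=(f_1,\dots,f_{i-1})$ and $Q'=(f'_1,\dots,f'_{i-1})$. By (ii), $\inn Q=\inn Q'$, so $\gr_J(R/Q)\cong\gr_J(R/Q')$. The key claim is that the extensions above for $(Q,f_i)$ and $(Q',f'_i)$ are equivalent modulo $J^{m}$, where $m=N-(a_1+\dots+a_{i-1})$. Concretely, we would build a morphism of $4$-term exact sequences that is the identity on graded pieces of degree less than $m$. The construction lifts $f'_i-f_i=\varepsilon_i$ together with the differences $Q'-Q$: each previous step loses only $a_j$ degrees, through the torsion of $\frac{(f_1,\dots,f_{j-1}):f_j}{(f_1,\dots,f_{j-1})}$, and not a doubling. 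From equivalence modulo $J^m$ with $m>b\ge a_i$, we read off that the kernels $\frac{Q:f_i}{Q}$ and $\frac{Q':f'_i}{Q'}$ agree in all degrees at most $a_i$ and vanish beyond. This gives (i) for index $i$, and with $m>\ar(f_1,\dots,f_i)$ it also gives $\inn(Q',f'_i)=\inn(Q,f_i)$, which is (ii).

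\textbf{Step 3: part (iii).} Take $i=r$. Then $\gr_J(R/I)=\gr(R)/\inn(I)$, so (ii) gives the isomorphism of associated graded rings. Equality of the Artin--Rees numbers follows because $\ar_J(I)$ is determined by $\inn(I)$ and the filtration $J^n\cap I$, which coincide for $I$ and $I'$ by the same equalities $I+J^n=I'+J^n$.

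\textbf{Main obstacle.} The hard part is the accounting in Step 2. We must show that an equivalence of extensions lets the errors accumulate additively ($a_1+\dots+a_{i-1}$) rather than multiplicatively, with only one extra $b$ lost at the final comparison. The first approach to try is to compare the Yoneda classes in $\Ext^2_R(R/(Q,f_i), \frac{Q:f_i}{Q})$, with everything truncated modulo $J^{m}$, so that each inductive step subtracts $a_j$ from the available precision.
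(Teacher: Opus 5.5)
\textbf{Verdict: genuine gap.} Your proposal does not prove the theorem, because Step 2 carries the whole content and, as written, it is not an argument. You concede this yourself under ``Main obstacle''.

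\textbf{Why Step 2 fails.} The comparison you propose is not well posed. The class of $0\to (Q:f_i)/Q\to R/Q\xrightarrow{f_i}R/Q\to R/(Q,f_i)\to 0$ lives in $\Ext^2_R\bigl(R/(Q,f_i),(Q:f_i)/Q\bigr)$. The primed class lives in $\Ext^2_R\bigl(R/(Q',f_i'),(Q':f_i')/Q'\bigr)$. Both end terms change under the perturbation, and one of them is exactly the module $(Q':f_i')/Q'$ whose $J$-Loewy length you want to determine. So ``equivalent modulo $J^m$'' has no meaning until the conclusion is already known. The modules are also not graded, so ``the kernels agree in all degrees at most $a_i$ and vanish beyond'' gives neither an isomorphism nor an equality of Loewy lengths. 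The claim that each earlier step costs only $a_j$, rather than a doubling, is precisely what has to be proved.

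Even granting Step 2, your bookkeeping does not reach the stated $N$. You need $m=N-(a_1+\cdots+a_{i-1})>\ar(f_1,\ldots,f_i)$, i.e. $N>a_1+\cdots+a_{i-1}+\ar(f_1,\ldots,f_i)$. The theorem's $N$ does not guarantee this, for instance when $N=\ar(f_1,\ldots,f_i)+1$ and $a_1+\cdots+a_{i-1}\ge 1$.

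\textbf{The missing idea.} Perturb one generator at a time, and realize the unknown module as the \emph{middle} term of an extension whose end terms do not move.
For a pair $f_1,f_2$, Lemma \ref{cong1} exhibits both $((f_1):f_2)/(f_1)$ and $((f_1'):f_2)/(f_1')$ as extensions of the same $A=(0:f_2)/f_1(0:f_1f_2)$ by the same $C=((f_2):f_1)/((f_2)+(0:f_1))$. This uses $0:f_1'=0:f_1$, $(f_2):f_1'=(f_2):f_1$ and $f_1'(0:f_1f_2)=f_1(0:f_1f_2)$.
Proposition \ref{perturb1} then builds explicit cocycles $\alpha_1,\alpha_1'\colon P_1\to A$ from a single lift $\rho\colon P_0\to (f_2):f_1$. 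Write $\varepsilon=\sum_\nu c_\nu\eta_\nu$ with $c_\nu\in J^{a_2}$ and $\eta_\nu\in J^{a_1+a_2}$. Then $\alpha_1'-\alpha_1=\sum_\nu c_\nu\,(q_A\circ w_\nu)=0$, since $J^{a_2}A=0$. So the classes in $\Ext^1_R(C,A)$ agree and the middle modules are exactly isomorphic, at cost $a_1+2a_2$.
For index $i$, apply this to the pair $(f_1,f_i)$ in $R/(f_2,\ldots,f_{i-1})$. There $f_1$ is filter regular with Loewy length at most $a_1+\cdots+a_{i-1}$ (Lemma \ref{extend}). This costs $a_1+\cdots+a_{i-1}+2a_i\le a_1+\cdots+a_r+b$, and the Artin--Rees numbers enter unshifted via Lemmas \ref{lem:ARquot} and \ref{lem:one}. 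Then induct on $r$ in $R/(f_1')$.

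\textbf{Smaller errors.}
\begin{enumerate}
\item The equality $(f')+J^n=(f)+J^n$ holds only for $n\le N$. Take $f=x$, $f'=x+y^N$ in $k[[x,y]]$ with $J=\mm$ and $n>N$.
\item In general $J^n\cap I\neq J^n\cap I'$. In (iii) you should use $\ar_J(I)=d(\inn(I))$ alone.
\item Your convention for $a_J$ (annihilation by $J^{a_i+1}$) is off by one from the paper's $a_J(M)=\min\{n: J^nM=0\}$. This matters for an exact bound.
\end{enumerate}
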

The idea of the proof is to use the equivalence of two extensions, which provides a refinement of \cite[Proposition 3.4]{QNVT}. As a consequence, we can remove the factors $2^i$ from the sum of the $J$-Loewy lengths $B_r=a_1+2a_2+\cdots+2^{r-1}a_r$ of Quy and Ngo Viet Trung \cite{QNVT} and $B_r'=a_1+a_2+2a_3+\cdots+2^{r-2}a_r$ of Tan \cite{Tanarxiv}, and obtain $B_r''=a_1+a_2+\cdots+a_r+\max_{1\leq i\leq r}a_i$. In particular, if $a_1=\cdots=a_r=1$, then $B_r=2^r-1$, $B_r'=2^{r-1}$, while $B_r''=r+1$.

The paper is organized as follows.  Section~2 recalls the basic facts from \cite{QNVT}.  Section~3 proves Theorem~\ref{thm:intro-main}.  Section~\ref{sec:applications} discusses several consequences and applications of the improved perturbation bound.
\section{Preliminary }

Throughout this paper $(R,\mm)$ is a Noetherian local ring and $J$ is an arbitrary ideal of $R$.  For a finitely generated $R$-module $M$, \emph{$J$-Loewy length} of $M$, denoted by $a_J(M)$, is defined as follow 
$$
 a_J(M)=\inf\{n\ge0\mid J^nM=0\},
$$
with the convention $a_J(M)=\infty$ if no such $n$ exists.

A sequence $f_1,\ldots,f_r$ in $R$ is called \emph{$J$-filter regular} if for every $i =1, \ldots, r$
$$
 a_J\!\left(
 \frac{(f_1,\ldots,f_{i-1}):f_i}{(f_1,\ldots,f_{i-1})}
 \right)<\infty.
$$

For an ideal $I\subseteq R$, let $\inm(I)$ denote its \emph{initial ideal} of $I$ in $\gr_J(R)$.  We use the convention of \cite{QNVT}; in particular,
\[
 \gr_J(R/I)\cong \gr_J(R)/\inm(I).
\]
The \emph{Artin--Rees number of $I$ with respect to $J$}, denoted $\ar_J(I)$, is the least integer $c$ such that
\[
 J^{n+1}\cap I=J(J^n\cap I)
 \qquad\text{for all }n\ge c.
\]
By \cite[Proposition 2.3]{QNVT},
\begin{equation}\label{eq:AR-initial}
 \ar_J(I)=d(\inm(I)),
\end{equation}
where $d(\inm(I))$ is the maximal degree of a minimal homogeneous generating set of $\inm(I)$.

We collect the results from \cite{QNVT} used below.

\begin{Lemma}\label{lem:ARquot}
Let $K\subseteq I$ and put $\Rbar=R/K$ and $\Jbar=(J+K)/K$.  Then
\[
 \ar_{\Jbar}(I/K)\le \ar_J(I).
\]
\end{Lemma}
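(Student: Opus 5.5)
The plan is to verify directly from the definition of the Artin--Rees number. In the quotient ring $\Rbar=R/K$ the powers of $\Jbar$ are $\Jbar^n=(J^n+K)/K$. We show that for every $n\ge c:=\ar_J(I)$
\[
 \Jbar^{n+1}\cap (I/K)=\Jbar\bigl(\Jbar^n\cap (I/K)\bigr).
\]
This gives $\ar_{\Jbar}(I/K)\le c$ at once.

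The first step is to identify the intersections. Since $K\subseteq I$, the modular law gives
\[
 (J^m+K)\cap I=(J^m\cap I)+K .
\]
So $\Jbar^m\cap(I/K)$ is the image of $J^m\cap I$ in $\Rbar$. Taking $m=n$ and multiplying by $\Jbar$, the right-hand side $\Jbar(\Jbar^n\cap(I/K))$ is the image of $J(J^n\cap I)+K$. Taking $m=n+1$, the left-hand side is the image of $(J^{n+1}\cap I)+K$.

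The second step uses the hypothesis $n\ge c$, which gives $J^{n+1}\cap I=J(J^n\cap I)$. Therefore
\[
 (J^{n+1}\cap I)+K=J(J^n\cap I)+K,
\]
and the two images in $\Rbar$ agree. This proves the displayed equality for every $n\ge c$, hence the inequality.

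There is no real obstacle here. The only point needing care is the modular-law identity $(J^m+K)\cap I=(J^m\cap I)+K$. It holds because if $x=j+k\in I$ with $j\in J^m$ and $k\in K\subseteq I$, then $j=x-k\in J^m\cap I$. This is exactly where the assumption $K\subseteq I$ is used.
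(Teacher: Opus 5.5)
Your proof is correct and complete. The paper gives no argument of its own for this lemma; it only cites \cite[Lemma 2.6]{QNVT}. Your direct verification is therefore a self-contained alternative. The modular law $(J^m+K)\cap I=(J^m\cap I)+K$ is exactly where $K\subseteq I$ is used. It identifies $\Jbar^m\cap(I/K)$ with the image of $J^m\cap I$, and $\Jbar\bigl(\Jbar^n\cap(I/K)\bigr)$ with the image of $J(J^n\cap I)$. So the equality $J^{n+1}\cap I=J(J^n\cap I)$ for $n\ge\ar_J(I)$ passes directly to $\Rbar$.

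For comparison, the facts the paper records next to this lemma give a second route. By Lemma~\ref{lem:ARin}, $\inn(I/K)=\inn(I)/\inn(K)$ inside $\gr_{\Jbar}(\Rbar)\cong\gr_J(R)/\inn(K)$. Hence the images of a homogeneous generating set of $\inn(I)$ generate $\inn(I/K)$, so $d(\inn(I/K))\le d(\inn(I))$. By \eqref{eq:AR-initial}, this is the claimed inequality.

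That route is a one-liner once the identification $\ar_J(I)=d(\inn(I))$ is available. However, that identification is itself a nontrivial result: one direction needs an Artin--Rees or Krull-intersection argument to control the tail $J^m\cap I$. Your argument uses nothing beyond the definition and the modular law, which suits a lemma this elementary.
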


\begin{proof}
This is \cite[Lemma 2.6]{QNVT}.
\end{proof}

\begin{Lemma}\label{lem:ARin}
If $K\subseteq I$, then
$
 \inm(I/K)=\inm(I)/\inm(K)
$
in the associated graded ring of $R/K$.  Moreover, if $\inm(I)=\inm(I')$, then
$
 \ar_J(I)=\ar_J(I').
$
\end{Lemma}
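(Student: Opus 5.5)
The plan is to identify $\gr_{\Jbar}(R/K)$ with $\gr_J(R)/\inm(K)$ degree by degree. Then I compare the homogeneous components of $\inm(I/K)$ with those of $\inm(I)/\inm(K)$ directly, using $K\subseteq I$ at the one point where it matters. The second assertion then follows at once from \eqref{eq:AR-initial}.

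\textbf{Step 1 (identifying the graded rings).} The degree $n$ component of $\gr_{\Jbar}(R/K)$ is
\[
(J^n+K)/(J^{n+1}+K)\cong J^n/(J^{n+1}+J^n\cap K).
\]
The degree $n$ component of $\inm(K)$ is $(J^n\cap K+J^{n+1})/J^{n+1}$. Hence in each degree $\gr_{\Jbar}(R/K)_n\cong \gr_J(R)_n/\inm(K)_n$, and these isomorphisms are compatible with multiplication. This is the standard isomorphism $\gr_{\Jbar}(R/K)\cong\gr_J(R)/\inm(K)$ recalled in the preliminaries.

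\textbf{Step 2 (components of $\inm(I/K)$).} By definition, the degree $n$ piece of $\inm(I/K)$ is the image of $(I/K)\cap \Jbar^n$ in $\Jbar^n/\Jbar^{n+1}$, namely
\[
\bigl(I\cap(J^n+K)+J^{n+1}+K\bigr)/(J^{n+1}+K).
\]
The key point is the modular law. Since $K\subseteq I$, we have $I\cap(J^n+K)=(I\cap J^n)+K$. So the degree $n$ piece equals
\[
\bigl((I\cap J^n)+J^{n+1}+K\bigr)/(J^{n+1}+K).
\]
Under the isomorphism of Step 1 this corresponds to
\[
\bigl(I\cap J^n+J^{n+1}\bigr)/\bigl(J^{n+1}+J^n\cap K\bigr).
\]
That quotient is exactly $\inm(I)_n/\inm(K)_n$; note that $\inm(K)\subseteq\inm(I)$ because $K\subseteq I$. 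Summing over $n$ gives $\inm(I/K)=\inm(I)/\inm(K)$.

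\textbf{Step 3 (Artin--Rees numbers).} By \eqref{eq:AR-initial}, $\ar_J(I)=d(\inm(I))$ and $\ar_J(I')=d(\inm(I'))$. The right-hand side is the maximal degree of a minimal homogeneous generating set, and it depends only on the graded ideal. So $\inm(I)=\inm(I')$ forces $\ar_J(I)=\ar_J(I')$.

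The only delicate point is the bookkeeping in Step 2. One must check that the order of an element of $R/K$ with respect to $\Jbar$ is computed through $J^n+K$, and that the hypothesis $K\subseteq I$ is used through the modular law. Without $K\subseteq I$, the intersection $I\cap(J^n+K)$ would not split as $(I\cap J^n)+K$.
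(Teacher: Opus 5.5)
Your proof is correct. In Step 2 you use $K\subseteq I$ to get $I\cap(J^n+K)=(I\cap J^n)+K$ and $J^n\cap K\subseteq I\cap J^n$, which correctly identifies $\inm(I/K)_n$ with $\inm(I)_n/\inm(K)_n$, and your Step 3 is exactly the paper's argument via \eqref{eq:AR-initial}. The paper gives no argument for the first assertion and simply cites \cite[Lemma 2.2]{QNVT}, so your Steps 1--2 supply a self-contained proof of that cited fact rather than a different route.
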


\begin{proof}
The first statement is \cite[Lemma 2.2]{QNVT}; the second follows from \eqref{eq:AR-initial}.
\end{proof}

\begin{Lemma}\label{lem:one}
Let $f\in R$ be a $J$-filter regular element.  Set
\[
 c=\max\{a_J(0:f),\ar_J(f)+1\}.
\]
If $f'=f+\varepsilon$ with $\varepsilon\in J^c$, then 
$f'$ is a $J$-filter regular element with  $0:f'=0:f$
and $\inm(f')=\inm(f)$.
\end{Lemma}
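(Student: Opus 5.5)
The plan is to compare $f$ and $f'$ directly. Throughout I set $a=a_J(0:f)$ and $s=\ar_J(f)$, so that $c\ge a$ and $c\ge s+1$, and I assume $J\subseteq\mm$ (if $J=R$ then $a<\infty$ forces $0:f=0$ and everything is trivial). The first observation is that $\varepsilon\in J^c\subseteq J^a$ annihilates $0:f$. The second uses the Artin--Rees number: for $n\ge s$,
\[
J^n\cap(f)=J^{n-s}\bigl(J^s\cap(f)\bigr)\subseteq fJ^{n-s}.
\]
So whenever $xf\in J^n$ with $n\ge s$, we can write $x=u+z$ with $u\in J^{n-s}$ and $z\in 0:f$. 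Since $z\varepsilon=0$, this gives $x\varepsilon=u\varepsilon\in J^{n-s+c}\subseteq J^{n+1}$. This ``absorption'' statement is the engine of the whole proof.

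\emph{Step 1: $0:f'=0:f$.} The inclusion $0:f\subseteq 0:f'$ is immediate, since $zf'=zf+z\varepsilon=0$ for $z\in0:f$. Conversely, take $x\in 0:f'$. Then $xf=-x\varepsilon\in J^c\cap(f)\subseteq fJ^{c-s}$, so $x=y_1+z_1$ with $y_1\in J^{c-s}\subseteq J$ and $z_1\in 0:f$. Then $z_1f'=0$, hence $y_1\in 0:f'$ and $xf=y_1f$. Repeating the argument with $y_1$ in place of $x$, and using the same decomposition at each step, produces $y_k\in (0:f')\cap J^{k}$ with $xf=y_kf\in J^k$ for every $k$. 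By the Krull intersection theorem, $xf\in\bigcap_k J^k=0$. Consequently $a_J(0:f')=a_J(0:f)<\infty$, so $f'$ is $J$-filter regular with the same Loewy length.

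\emph{Step 2: $\inm(f)=\inm(f')$.} Both ideals are homogeneous, so it suffices to show that their degree-$n$ pieces agree for every $n$. These pieces are the images of $(f)\cap J^n$ and $(f')\cap J^n$ modulo $J^{n+1}$, that is, the initial forms of the elements $xf\in J^n$ and $xf'\in J^n$ respectively.

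For the inclusion $\inm(f)\subseteq\inm(f')$, take $xf\in J^n$. If $n\ge s$, the absorption statement gives $x\varepsilon\in J^{n+1}$. If $n<s$, then trivially $x\varepsilon\in J^c\subseteq J^{s+1}\subseteq J^{n+1}$. In either case $xf'\equiv xf \pmod{J^{n+1}}$, so the two elements have the same image in degree $n$.

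For the reverse inclusion, take $xf'\in J^n$. I will prove by upward induction on $m$ that $xf\in J^{\min(n,m)}$ for all $m$. The base case is trivial. For the inductive step, suppose $xf\in J^m$ with $m<n$. The argument of the previous paragraph, applied with $m$ in place of $n$, gives $x\varepsilon\in J^{m+1}$. Hence $xf=xf'-x\varepsilon\in J^{\min(n,m+1)}$. The induction terminates at $xf\in J^n$, and one more application of the absorption statement shows $xf\equiv xf' \pmod{J^{n+1}}$.

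The main obstacle is Step 1. A naive argument only yields $x\in J^{c-s}+(0:f)$, and one must notice that the $J$-part can be pushed back into $0:f'$ and iterated until the Krull intersection theorem applies. I expect this to be exactly where both hypotheses $c\ge a$ and $c\ge s+1$ are used.
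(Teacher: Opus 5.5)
The paper does not prove this lemma; it only quotes it from \cite[Proposition 3.2]{QNVT}. Your argument is therefore a self-contained replacement, and for $J\subseteq\mm$ it is correct. It uses only the definition of $\ar_J(f)$ and Krull's intersection theorem. The central estimate is right: if $xf\in J^n$ then $x\varepsilon\in J^{n+1}$. For $n\ge s$ this comes from $J^n\cap(f)\subseteq fJ^{n-s}$ together with $\varepsilon(0:f)=0$, and for $n<s$ it follows directly from $\varepsilon\in J^{s+1}$. Your degree-by-degree comparison in Step 2 is sound, including the upward induction showing that the order of $xf$ cannot drop below that of $xf'$.

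Three remarks.

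\emph{The case $J=R$.} Your parenthetical is wrong: this case is not trivial, and the lemma is actually false there. With $J=R$, the condition $a<\infty$ forces $0:f=0$; also $\ar_J(f)=0$ and $c=1$, so $\varepsilon$ is arbitrary. Taking $\varepsilon=-f$ gives $f'=0$ and $0:f'=R\neq 0=0:f$. The right move is to state $J\subseteq\mm$ as a standing hypothesis, as the paper implicitly does.

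\emph{Step 1 is not the main obstacle.} Your central estimate gives it immediately, with no iteration. Suppose $xf'=0$ but $xf\neq0$. By Krull there is $m$ with $xf\in J^m\setminus J^{m+1}$. Then $x\varepsilon\in J^{m+1}$, so $0=xf'\equiv xf\not\equiv 0 \pmod{J^{m+1}}$, which is a contradiction.

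\emph{If you keep the iteration,} make the bookkeeping explicit. Repeating the first step literally with $y_1$ in place of $x$ only gives $y_2\in J^{c-s}$ again. You must feed in $y_k\in J^k$, so that $y_kf=-y_k\varepsilon\in J^{k+c}\cap(f)\subseteq fJ^{k+c-s}$, which gives $y_{k+1}\in J^{k+1}$.

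With the streamlined version, both conclusions follow from one statement. If $xf=0$, then $x\in 0:f$, so $x\varepsilon=0$ and $xf'=0$. If $xf\neq0$, then $xf$ and $xf'$ have the same $J$-adic order and the same initial form. This gives $0:f'=0:f$ and $\inm(f')=\inm(f)$ together.
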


\begin{proof}
This is \cite[Proposition 3.2]{QNVT}.
\end{proof}

\section{Main result}
In this section, we will give the bound of the perturbation index, this result is an improvement of the result of Quy-N. V. Trung \cite[Theorem 3.5]{QNVT} and \cite[Theorem 3.3]{Tanarxiv}.

First, we will need some useful lemmas.
\begin{Lemma}\label{Loewy}
Let
$$0 \To M \To N \To P \To 0$$
be a short exact sequence $R$-modules and $J$ is an arbitrary ideal of $R$.
Then
$$\max\{a_J(M),a_J(P)\} \leq a_J(N).$$
\end{Lemma}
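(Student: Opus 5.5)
The plan is to argue directly from the definition of $a_J$ as the least $n$ with $J^n$ annihilating the module, treating the submodule and the quotient separately. We identify $M$ with its image in $N$ and $P$ with $N/M$. If $a_J(N)=\infty$ there is nothing to prove, so we assume $a_J(N)=n<\infty$, which means $J^nN=0$.

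For the submodule, any $x\in M$ is an element of $N$, so $J^n x\subseteq J^nN=0$. Hence $J^nM=0$, and by minimality $a_J(M)\le n$.

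For the quotient, the surjection $\pi\colon N\to P$ is $R$-linear, so $J^nP=J^n\pi(N)=\pi(J^nN)=\pi(0)=0$. This gives $a_J(P)\le n$.

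Combining the two inequalities yields $\max\{a_J(M),a_J(P)\}\le a_J(N)$. There is no real obstacle here. The only care needed is the convention $a_J=\infty$, which is handled by the first case, and the observation that neither direction uses exactness in the middle, only injectivity of $M\to N$ and surjectivity of $N\to P$.
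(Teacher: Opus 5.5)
Your proposal is correct and takes the same approach as the paper, which says only that the lemma follows directly from the definition of $J$-Loewy length. You write out the two verifications explicitly: $J^nM\subseteq J^nN=0$ for the submodule, and $J^nP=\pi(J^nN)=0$ for the quotient.
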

\begin{proof}
  It easy follows from definition of $J$-Loewy length.
\end{proof}
\begin{Lemma}\label{cong1}
  Let $(R,\mm)$ be a Noetherian local ring and $J$ an arbitrary ideal. Let $f_1,f_2$ be a $J$-filter regular sequence of $R$. Let
  $$a_1=a_J(0:f_1) \ \ \text{and} \ \ a_2=a_J\left(\frac{(f_1):f_2}{(f_1)}\right).$$
  Set
  $$N=\max\left\{a_1+a_2,\ar_J(f_1), \ar_J(f_1,f_2)\right\}+1.$$
  The for every $\varepsilon \in J^N$,

    {\rm (i)} $0:f_1'=0:f_1$, $0:f_1f_2=0:f_1'f_2$, $f_1'(0:f_1f_2)=f_1(0:f_1f_2)$, $(f_2):f_1'=(f_2):f_1.$
    
    {\rm (ii)} Two short exact sequences
    \begin{equation}\label{exact1}
      0 \To \dfrac{0:f_2}{f_1(0:f_1f_2)} \xrightarrow{i} \dfrac{(f_1):f_2}{(f_1)} \xrightarrow{p} \dfrac{(f_2):f_1}{(f_2)+(0:f_1)} \To 0
    \end{equation}
    and 
    \begin{equation}\label{exact2}
      0 \To \dfrac{0:f_2}{f_1(0:f_1f_2)} \xrightarrow{i'}  \dfrac{(f'_1):f_2}{(f'_1)} \xrightarrow{p'} \dfrac{(f_2):f_1}{(f_2)+(0:f_1)} \To 0
    \end{equation}
\end{Lemma}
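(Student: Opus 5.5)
The plan is to prove (i) by direct element manipulations, using only the Loewy-length bounds $J^{a_1}(0:f_1)=0$ and $J^{a_2}\bigl((f_1):f_2\bigr)\subseteq (f_1)$, Lemma~\ref{lem:one}, and a Nakayama argument. Then (ii) follows by writing down the standard connecting maps for $f_1$ and for $f_1'$ and using (i) to identify the end terms. Throughout $f_1'=f_1+\varepsilon$ with $\varepsilon\in J^N$. We may assume $J\subseteq\mm$: if $J=R$, every Loewy length is $0$ and all the modules in question vanish.

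\emph{Step 1 (the first two equalities).} Since $N\ge \max\{a_1,\ar_J(f_1)+1\}$, Lemma~\ref{lem:one} gives $0:f_1'=0:f_1$. Then $xf_1f_2=0$ iff $f_2x\in 0:f_1=0:f_1'$ iff $xf_1'f_2=0$, so $0:f_1f_2=0:f_1'f_2$.

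\emph{Step 2 (the key computation).} Let $x\in 0:f_1f_2$. Because $N\ge a_1+a_2+1$, write $\varepsilon$ as a sum of products $uv$ with $u\in J^{a_1+a_2}$ and $v\in J$. As $x\in (f_1):f_2$, we have $ux\in J^{a_2}x\subseteq (f_1)$, say $ux=f_1y$. Then
\[
f_1f_2y=u\,(f_2x)\in J^{a_1}(0:f_1)=0,
\]
so $y\in 0:f_1f_2$. Hence
\[
\varepsilon(0:f_1f_2)\subseteq J\,f_1(0:f_1f_2).
\]
This gives $f_1'(0:f_1f_2)\subseteq f_1(0:f_1f_2)$ and $f_1(0:f_1f_2)\subseteq f_1'(0:f_1f_2)+Jf_1(0:f_1f_2)$. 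Nakayama's lemma then yields equality. The same device gives $(0:f_2)\cap(f_1')=f_1'(0:f_1'f_2)=f_1(0:f_1f_2)$, which is needed in (ii).

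\emph{Step 3 (colon ideals).} For $(f_2):f_1'=(f_2):f_1$, I would first build the unperturbed sequence \eqref{exact1}. The map $i$ is induced by $0:f_2\subseteq (f_1):f_2$, and its kernel is $(0:f_2)\cap(f_1)=f_1(0:f_1f_2)$. The map $p$ sends $x$ with $f_2x=f_1z$ to the class of $z$. This is well defined because $z$ is determined modulo $0:f_1$ and changing $x$ by $f_1w$ changes $z$ by $f_2w$. It is surjective by symmetry of the defining relation, and its kernel is the image of $i$.

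By Lemma~\ref{Loewy}, $J^{a_2}$ kills $\bigl((f_2):f_1\bigr)/\bigl((f_2)+(0:f_1)\bigr)$. Now take $z\in(f_2):f_1$. Writing $\varepsilon$ as above, $J^{a_2}z\subseteq (f_2)+(0:f_1)$, and the extra factor $J^{a_1}$ kills the $(0:f_1)$-part after multiplying by $f_1$. Hence $\varepsilon z\in (f_2)+(0:f_1)$, and so $f_1'z=f_1z+\varepsilon z\in (f_2)+(0:f_1)$. One more multiplication is needed to land in $(f_2)$ itself. Here I would use $J^{a_1}(0:f_1)=0$ together with $\ar_J(f_1,f_2)<N$ to absorb the error term, this time via a Nakayama argument on the module $\bigl((f_2):f_1'+(f_2):f_1\bigr)/(f_2):f_1$. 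The reverse inclusion comes from the symmetric argument with $f_1=f_1'-\varepsilon$. This is legitimate because Step 1 shows that the relevant annihilators, and hence $a_1$, are unchanged for $f_1'$.

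I expect this colon equality to be the main obstacle. Controlling exactly when $\varepsilon z$ falls into $(f_2)$ rather than $(f_2)+(0:f_1)$ is where the Artin--Rees hypothesis $\ar_J(f_1,f_2)$ must enter.

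\emph{Step 4 (the perturbed sequence).} Define $i'$ and $p'$ by the same recipe with $f_1'$ in place of $f_1$. Steps 1--3 identify $(0:f_2)\cap(f_1')$ with $f_1(0:f_1f_2)$, the module $0:f_1'$ with $0:f_1$, and $(f_2):f_1'$ with $(f_2):f_1$. So the end terms of \eqref{exact2} coincide with those of \eqref{exact1}, and exactness follows verbatim from the unperturbed argument.
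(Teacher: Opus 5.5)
Your Steps 1, 2 and 4 are essentially sound. Step 4 is the paper's own argument for (ii); for (i) the paper just cites \cite[Proposition 3.4]{QNVT}. Step 3, however, has a genuine gap, exactly where you suspected. The gap propagates to Step 4: $p'$ takes values in $C$ only because $(f_2):f_1'\subseteq(f_2):f_1$, and it is surjective only because of the reverse inclusion.

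The inclusion $(f_2):f_1\subseteq(f_2):f_1'$ needs no Artin--Rees at all. From $J^{a_2}C=0$ you get $J^{a_1+a_2}\bigl((f_2):f_1\bigr)\subseteq J^{a_1}\bigl((f_2)+(0:f_1)\bigr)\subseteq(f_2)$. Hence $\varepsilon z\in(f_2)$ and $f_1'z\in(f_2)$ at once; no ``one more multiplication'' is required.

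The inclusion $(f_2):f_1'\subseteq(f_2):f_1$ is the real issue, and neither device you propose delivers it.
\begin{enumerate}[label=\textup{(\alph*)}]
\item The ``symmetric argument'' is circular. Rerunning the computation for $f_1'$ requires $J^{a_2}$ to kill $\bigl((f_2):f_1'\bigr)/\bigl((f_2)+(0:f_1')\bigr)$, which is a quotient of $\bigl((f_1'):f_2\bigr)/(f_1')$. Knowing that this module has Loewy length $a_2$ is the content of Proposition~\ref{perturb1}, which needs the larger bound $a_1+2a_2$ and is itself proved from this lemma. Invariance of $a_1$ does not help here.
\item The Nakayama argument does not close either. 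From $f_1z+\varepsilon z\in(f_2)$ and Artin--Rees for $(f_1,f_2)$ you only get $z\in(f_2):f_1+J^{N-\ar_J(f_1,f_2)}$. You do not get $z\in(f_2):f_1+J\bigl((f_2):f_1'\bigr)$, because nothing controls the Artin--Rees behaviour of $(f_2):f_1'$.
\end{enumerate}

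The missing idea is to pass to $\Rbar=R/(f_2)$.
\begin{enumerate}[label=\textup{(\alph*)}]
\item The easy inclusion says precisely that $a_J\bigl(((f_2):f_1)/(f_2)\bigr)\le a_1+a_2$. So $f_1$ is $\Jbar$-filter regular on $\Rbar$ with $a_{\Jbar}(0_{\Rbar}:f_1)\le a_1+a_2$.
\item Lemma~\ref{lem:ARquot} with $K=(f_2)$ gives $\ar_{\Jbar}(f_1\Rbar)\le\ar_J(f_1,f_2)$.
\item Hence $N\ge\max\{a_{\Jbar}(0_{\Rbar}:f_1),\ar_{\Jbar}(f_1\Rbar)+1\}$, and Lemma~\ref{lem:one} applied in $\Rbar$ gives $0_{\Rbar}:f_1'=0_{\Rbar}:f_1$. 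This is exactly $(f_2):f_1'=(f_2):f_1$.
\end{enumerate}
This is where $\ar_J(f_1,f_2)$ enters, and it is the reduction the paper itself uses in Proposition~\ref{perturb1}(iv). Alternatively, you can iterate your Artin--Rees step, using the easy inclusion to keep the remainder inside $(f_2):f_1'$, and conclude with Krull's intersection theorem instead of Nakayama.

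Two smaller points.
\begin{enumerate}[label=\textup{(\alph*)}]
\item In Step 2, an element $x\in 0:f_1f_2$ need not lie in $(f_1):f_2$. For example, take $R=k[[s,t]]/(s^2,st)$, $J=\mm$, $f_1=t$, $f_2=s$; then $0:f_1f_2=R$ but $1\notin(f_1):f_2$. What is true is $J^{a_1}x\subseteq 0:f_2$, so $J^{a_1+a_2}x\subseteq(f_1)$, and that is all you actually use.
\item $J=R$ is not a vacuous case but a counterexample to the lemma: there $N=1$, and $\varepsilon=-f_1$ gives $0:f_1'=R\neq 0:f_1$. So $J\subseteq\mm$ is an implicit hypothesis, not a reduction you can make.
\end{enumerate}
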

\begin{proof}
The proof of (i) is given in \cite[Proposition 3.4]{QNVT}. We will prove (ii).
Consider the sequence (\ref{exact1}). We define
$i: \dfrac{0:f_2}{f_1(0:f_1f_2)} \To \dfrac{(f_1):f_2}{(f_1)}$
by $i(x+f_1(0:f_1f_2))=x+(f_1)$ for every $x\in 0:f_2$, and
$
p: \dfrac{(f_1):f_2}{(f_1)} \To \dfrac{(f_2):f_1}{(f_2)+(0:f_1)}
$
by $p(x+(f_1))=y+(f_2)+(0:f_1)$, where $f_2x=f_1y$. We can check that $i$ is injective, $p$ is surjective, and $\Im(i)=\Ker(p)$. Therefore, (\ref{exact1}) is a short exact sequence.

Consider the sequence (\ref{exact2}). We define
 $i': \dfrac{0:f_2}{f_1(0:f_1f_2)} \To \dfrac{(f'_1):f_2}{(f'_1)} $
by $i'(x+f_1(0:f_1f_2)) = x + (f'_1)$ for every $x \in 0:f_2$. We first check that $i'$ is well-defined. If $x\in0:f_2$, then $f_2x=0\in(f'_1)$, so $x\in(f'_1):f_2$. Moreover, if
$$
x+f_1(0:f_1f_2)=x'+f_1(0:f_1f_2),
$$
then $x-x'\in f_1(0:f_1f_2)=f'_1(0:f_1f_2)\subseteq(f'_1)$. Hence $x+(f'_1)=x'+(f'_1)$, and therefore $i'$ is well-defined. We have that $i'$ is injective, since
 \begin{align*}
   \Ker(i') & = \left\{ x + f_1(0:f_1f_2) \mid x \in 0:f_2 \ \text{and} \ x \in (f_1') \right\} \\
    & = \left\{ x + f_1(0:f_1f_2) \mid x \in f_1'(0:f_1'f_2) =  f_1'(0:f_1f_2) \right\} \quad (\text{since} \quad 0:f_1'=0:f_1) \\
    & =  \left\{ x + f_1(0:f_1f_2) \mid x \in f_1(0:f_1f_2) \right\} \quad (\text{since} \quad f_1'(0:f_1f_2) =  f_1(0:f_1f_2)) \\
    & = \overline{0}.
 \end{align*}
We also define
 $ p': \dfrac{(f'_1):f_2}{(f'_1)} \To \dfrac{(f_2):f_1}{(f_2)+(0:f_1)} $
by $p'(x+(f'_1)) = y + (f_2)+(0:f_1)$ for every $x\in (f'_1):f_2$, where $f_2x=f'_1y$. We can check that $p'$ is well-defined. We have that $p'$ is surjective. Indeed, for every $y + (f_2)+(0:f_1)$ with $y \in (f_2):f_1=(f_2):f'_1$, there exists $x \in R$ such that $f_2x = f_1'y$. It follows that $x \in (f_1'):f_2$ and $p'(x+(f'_1)) = y + (f_2)+(0:f_1)$.

We now show that $\Im(i')=\Ker(p')$. Clearly, $\Im(i')\subseteq\Ker(p')$. Conversely, let $x+(f'_1)\in\Ker(p')$. Then $f_2x=f'_1y$ for some $y\in (f_2)+(0:f_1)$. Write $y=f_2z+w$, where $w\in0:f_1=0:f'_1$. Hence
$$f_2x=f'_1(f_2z+w)=f_2f'_1z,$$
and thus $x-f'_1z\in0:f_2$. Since $x+(f'_1)=(x-f'_1z)+(f'_1)$, we obtain $x+(f'_1)\in\Im(i')$. Therefore, $\Im(i')=\Ker(p')$. Therefore, (\ref{exact2}) is a short exact sequence. The proof is complete. 
\end{proof}
Set $$A :=  \dfrac{0:f_2}{f_1(0:f_1f_2)}\ \ \text{and} \ \ C:=\dfrac{(f_2):f_1}{(f_2)+(0:f_1)}.$$ Then 
$$\xi: 0 \To A \To \dfrac{(f_1):f_2}{(f_1)} \To C \To 0$$
and 
$$\xi': 0 \To A \To \dfrac{(f'_1):f_2}{(f'_1)} \To C \To 0$$
are two \emph{extensions} of $A$ by $C$ \cite[Define after Example 7.23]{Rotman}. Two extensions $\xi$ and $\xi'$ are \emph{equivalent} if there exits a map $\varphi:  \dfrac{(f_1):f_2}{(f_1)} \To \dfrac{(f'_1):f_2}{(f'_1)}$ making
the following diagram commute: 

\[
\begin{tikzcd}[column sep=large]
0 \arrow[r] 
& A \arrow[r,"i"] \arrow[d,"\operatorname{id}_A"']
& \dfrac{(f_1):f_2}{(f_1)} \arrow[r,"p"] \arrow[d,"\varphi"]
& C \arrow[r] \arrow[d,"\operatorname{id}_C"] 
& 0\\
0 \arrow[r] 
& A \arrow[r,"i'"']
& \dfrac{(f'_1):f_2}{(f'_1)} \arrow[r,"p'"']
& C \arrow[r] 
& 0
\end{tikzcd}
\]
We denote the equivalent class of an extension $\xi$ by $[\xi]$, and we define 
$$e(C,A) = \left\{[\xi] \mid \xi \ \text{is an extension of $A$ by $C$}\right\}.$$
If $\xi$ and $\xi'$ are equivalent then $\varphi$ is an isomorphism by the Five Lemma. 
Given a projective resolution $\mathbf{P}$ of $C$, form the diagram

\[
\begin{tikzcd}[column sep=large]
 \arrow[r] 
& P_2 \arrow[r,"d_2"] \arrow[d,dotted]
& P_1 \arrow[r,"d_1"] \arrow[d,,dotted,"\alpha_1"]
& P_0 \arrow[r,"\pi"] \arrow[d,dotted,"\alpha_0"] 
& C\arrow[r] \arrow[d, "\operatorname{id}_C"]
& 0\\
\arrow[r] 
& 0 \arrow[r]
& A \arrow[r,"i"']
& \dfrac{(f_1):f_2}{(f_1)} \arrow[r,"p"'] 
& C\arrow[r]
&0
\end{tikzcd}
\tag{A}
\]
By the Comparasion Theorem \cite[Theorem 6.16]{Rotman}, there exist dotted arrows which comprise a chain map $(\alpha_n): \mathbf{P} \To \xi$ over $\operatorname{id}_C$. In particular, the first component $\alpha_1: P_1 \To A$ satisfies $\alpha_1d_2=0$, thus $d_2^*(\alpha_1)=0$, it follows that $\alpha_1 \in \Ker(d_2^*)$ and $\alpha_1$ is a cocycle.
By \cite[Theorem 7.30]{Rotman}, there is an isomorphism 
$$\psi : e(C,A) \To \Ext_R^1(C,A)$$
with $\psi([\xi]) = \overline{\alpha}$, where $ \overline{\alpha}_1 = \alpha_1 + \Im(d_1^*) \in \Ker(d_2^*)/\Im(d_1^*)=\Ext_R^1(C,A)$.

In the following proposition, we construct two explicit cocycles
$\alpha_1$ and $\alpha_1'$ representing $\xi$ and $\xi'$, respectively,
after perturbing $f_1$ to $f_1'$. We then show that these two cocycles
are equal. From this, we obtain
$$
\frac{(f_1):f_2}{(f_1)}
\cong
\frac{(f_1'):f_2}{(f_1')}.
$$
\begin{Proposition}

  \label{perturb1}
  Let $(R,\mm)$ be a Noetherian local ring and $J$ an arbitrary ideal of $R$. Let $f_1,f_2$ be a $J$-filter regular sequence on $R$. Let
  $$a_1=a_J(0:f_1) \ \ \text{and} \ \ a_2=a_J\left(\dfrac{(f_1):f_2}{(f_1)}\right).$$
  Set
  $$N=\max\left\{a_1+2a_2,\ar_J(f_1), \ar_J(f_1,f_2)\right\}+1.$$
  The for every $\varepsilon \in J^N$, $f'_1=f_1+\varepsilon$, we have

    {\rm (i)} $\dfrac{(f_1'):f_2}{(f_1')} \cong \dfrac{(f_1):f_2}{(f_1)}.$

    {\rm (ii)} $a_J\left(\dfrac{(f_1'):f_2}{(f_1')}\right)=a_J\left(\dfrac{(f_1):f_2}{(f_1)}\right)=a_2.$

    {\rm (iii)} $f_1',f_2$ is a $J$-filter regular sequence.

    {\rm (iv)} $\inn(f_1',f_2)=\inn(f_1,f_2)$.
\end{Proposition}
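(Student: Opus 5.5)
The plan is to prove (i) by showing that the two extensions $\xi$ and $\xi'$ of Lemma~\ref{cong1}(ii) define the same class in $\Ext^1_R(C,A)$. By the isomorphism $\psi$ they are then equivalent, and the Five Lemma makes the middle map $\varphi$ an isomorphism. Parts (ii)--(iv) will then follow quickly. Note first that $N\ge a_1+a_2+1$, so Lemma~\ref{cong1} applies. Thus $0:f_1'=0:f_1$, $(f_2):f_1'=(f_2):f_1$, and both sequences \eqref{exact1} and \eqref{exact2} are exact with the same end terms $A$ and $C$.

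\textbf{Building the cocycles.} Fix generators $y_1,\dots,y_m$ of $(f_2):f_1$, take $P_0=R^m\to C$ sending $e_j\mapsto \overline{y_j}$, and let $P_1\to P_0$ be a free cover of its kernel. Choose $x_j$ with $f_2x_j=f_1y_j$, and put $\alpha_0(e_j)=x_j+(f_1)$ in diagram (A).
\begin{itemize}
\item Since $f_1'y_j=f_1y_j+\varepsilon y_j\in (f_2)$, we can write $\varepsilon y_j=f_2t_j$ and set $x_j'=x_j+t_j$. Then $f_2x_j'=f_1'y_j$, and we put $\alpha_0'(e_j)=x_j'+(f_1')$.
\item For a relation $\sum r_je_j\in\Ker\pi$ we have $\sum r_jy_j=f_2z+w$ with $w\in 0:f_1$. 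The recipe in the proof of Lemma~\ref{cong1} then gives $\alpha_1(\sum r_je_j)=\sum r_jx_j-f_1z$ and $\alpha_1'(\sum r_je_j)=\sum r_jx_j'-f_1'z$, both read in $A$.
\item Their difference is the class of $\sum r_jt_j-\varepsilon z$ in $A=\dfrac{0:f_2}{f_1(0:f_1f_2)}$.
\end{itemize}

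\textbf{Killing the difference.} This is the step I expect to be the main obstacle. Lemma~\ref{Loewy} gives $a_J(A)\le a_2$, so $J^{a_2}A=0$. It therefore suffices to choose the $t_j$ so that $\sum r_jt_j-\varepsilon z\in J^{a_2}(0:f_2)+f_1(0:f_1f_2)$. Failing that, it suffices that the difference is a coboundary $\beta d_1$ with $\beta\colon P_0\to A$.
\begin{itemize}
\item Write $\varepsilon=\sum u_kv_k$ with $u_k\in J^{a_2}$ and $v_k\in J^{a_1+a_2+1}$.
\item Use $J^{a_2}\bigl((f_1):f_2\bigr)\subseteq (f_1)$ and $J^{a_1}(0:f_1)=0$. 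With these I would try to choose each $t_j$ as $J^{a_2}$ times a correction $s_j$ that depends linearly on $y_j$. Then $\sum r_jt_j-\varepsilon z$ lies in $0:f_2$ and is $J^{a_2}$-divisible there.
\item The Artin--Rees hypotheses $N>\ar_J(f_1)$ and $N>\ar_J(f_1,f_2)$ should be used to express $\varepsilon y_j\in (f_2)\cap J^N$ with coefficients deep in $J$. In particular the correction $\varepsilon z$ should be absorbed in $J^{a_2}(0:f_2)+f_1(0:f_1f_2)$.
\item This is where the extra $a_2$ in the bound $a_1+2a_2$ is spent: one $J^{a_2}$ moves into $(f_1)$, and one kills $A$.
\end{itemize}
Granting this, $\overline{\alpha_1}=\overline{\alpha_1'}$, so $\psi([\xi])=\psi([\xi'])$ and $\xi\sim\xi'$, which gives (i).

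\textbf{The remaining parts.} For (ii), isomorphic modules have equal $J$-Loewy length, so the value is $a_2$. For (iii), $0:f_1'=0:f_1$ has finite Loewy length by Lemma~\ref{cong1}(i), and $\dfrac{(f_1'):f_2}{(f_1')}$ has finite Loewy length by (ii). For (iv), Lemma~\ref{lem:one} applies since $N\ge\max\{a_1,\ar_J(f_1)+1\}$, giving $\inn(f_1')=\inn(f_1)$. I then follow \cite[Proposition 3.4]{QNVT}: an element $g\in(f_1',f_2)\cap J^n$ is compared with an element of $(f_1,f_2)$ differing by a multiple of $\varepsilon$, which lies in $J^{n+1}$. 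This uses $N>\ar_J(f_1,f_2)$ to control degrees via \eqref{eq:AR-initial} and the equality of colon ideals from Lemma~\ref{cong1}(i). It gives $\inn(f_1',f_2)\subseteq\inn(f_1,f_2)$, and symmetry, or the equality of Hilbert functions coming from (i), gives equality.
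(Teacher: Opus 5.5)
Your set-up for (i) is the paper's. Your $y_j$, $x_j$, $t_j$ are its $\rho$, $\sigma$, $\tau$ on a basis, and your $z,w$ (the paper's $u,v$) serve both extensions. The gap is the one step that actually proves (i): showing that the class of $\sum_j r_jt_j-\varepsilon z$ in $A$ vanishes. You leave it as ``I would try'' and ``should be used'', and the tool you point to is the wrong one.
\begin{itemize}
\item The Artin--Rees bounds play no role in this step. Controlling $(f_2)\cap J^N$ would need $\ar_J(f_2)$, which is not assumed.
\item $\varepsilon z$ cannot be absorbed on its own, since it need not lie in $0:f_2$.
\end{itemize}
The missing idea is the containment
\[
J^{a_1+a_2}\bigl((f_2):f_1\bigr)\subseteq (f_2).
\]
It follows from $J^{a_2}C=0$ together with $J^{a_1}(0:f_1)=0$. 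The first comes from Lemma~\ref{Loewy} applied to the \emph{surjection} $p$ in \eqref{exact1}; you only used the injection, to get $J^{a_2}A=0$. With the containment, write $\varepsilon=\sum_k u_kv_k$ with $u_k\in J^{a_2}$ and $v_k\in J^{a_1+a_2}$. Pick $s_{kj}$ with $f_2s_{kj}=v_ky_j$, and set $t_j=\sum_k u_ks_{kj}$. Then
\[
\sum_j r_jt_j-\varepsilon z=\sum_k u_k b_k,\qquad b_k:=\sum_j r_js_{kj}-v_kz,\qquad f_2b_k=v_k\Bigl(\sum_j r_jy_j-f_2z\Bigr)=v_kw=0,
\]
because $v_k\in J^{a_1}$ kills $w\in 0:f_1$. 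So the difference lies in $J^{a_2}(0:f_2)$, which maps to $0$ in $A$, and the two cocycles agree exactly. Your bookkeeping of the bound is also off. $J^{a_2}\cdot J^{a_1}$ pushes $(f_2):f_1$ into $(f_2)$, not $(f_1):f_2$ into $(f_1)$, and the second $J^{a_2}$ kills $A$.

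Your argument for (iv) is likewise only a description.
\begin{itemize}
\item The inclusion you claim directly, $\inn(f_1',f_2)\subseteq\inn(f_1,f_2)$, is the hard one. For $g=af_1'+bf_2\in J^n$, the correction $a\varepsilon$ lies in $J^{n+1}$ only if $a$ can be chosen deep in $J$, and you give no mechanism for that.
\item The other inclusion is easy without symmetry, since generators of $\inn(f_1,f_2)$ have degree $\le\ar_J(f_1,f_2)<N$. A symmetric argument would instead need $\ar_J(f_1',f_2)<N$, which is not known in advance.
\item Hilbert functions are not available for arbitrary $J$, and they do not follow from (i).
\end{itemize}
The paper avoids all this by working in $R'=R/(f_2)$. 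The displayed containment says $f_1$ is $J$-filter regular on $R'$ with $a_J(0_{R'}:f_1)\le a_1+a_2$. Lemma~\ref{lem:ARquot} gives $\ar_{JR'}(f_1R')\le\ar_J(f_1,f_2)$. So Lemma~\ref{lem:one} applied in $R'$ yields $\inn(f_1'R')=\inn(f_1R')$, and Lemma~\ref{lem:ARin} lifts this to $\inn(f_1',f_2)=\inn(f_1,f_2)$.
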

\begin{proof}
We only prove (i) and (iv), since parts (ii), (iii) follow from (i).

First, we construct the cocycle $\alpha_1$ appearing in diagram (A).
Let
\[
\mathbf{P}:\qquad
\cdots \longrightarrow P_2
\xrightarrow{d_2}
P_1
\xrightarrow{d_1}
P_0
\xrightarrow{\pi}
C
\longrightarrow 0
\]
be a projective resolution of $C$.
Let
$$
q:(f_2):f_1 \twoheadrightarrow
C=\frac{(f_2):f_1}{(f_2)+(0:f_1)}
$$
be the canonical quotient map.
Since $P_0$ is a projective $R$-module and $q$ is surjective,
there exists an $R$-linear lifting
\[
\rho:P_0\longrightarrow (f_2):f_1
\]
such that
$
q\circ\rho=\pi.
$
That is the following diagram is commutative
\[
\begin{tikzcd}[column sep=large,row sep=large]
& P_0 \arrow[dl,dashed,"\rho"'] \arrow[d,"\pi"] \\
(f_2):f_1 \arrow[r,two heads,"q"'] & C.
\end{tikzcd}
\tag{4.2}
\]
Since
$
f_1\bigl((f_2):f_1\bigr)\subseteq (f_2),
$
multiplication by $f_1$ yields an $R$-linear map
\[
f_1\rho:P_0\longrightarrow (f_2).
\] Let
$$
\mu_{f_2}:R\twoheadrightarrow (f_2),\qquad r\longmapsto f_2r,
$$
be the multiplication map. Since \(f_1\rho(P_0)\subseteq (f_2)\) and
\(P_0\) is a projective \(R\)-module, the map
$
f_1\rho:P_0\longrightarrow (f_2)
$
lifts through \(\mu_{f_2}\). Hence there exists an \(R\)-linear map
\[
\sigma:P_0\longrightarrow R
\]
such that
$
\mu_{f_2}\circ\sigma=f_1\rho,
$
or the following diagram is commutative
\[
\begin{tikzcd}[column sep=large,row sep=large]
& R \arrow[d,two heads,"\mu_{f_2}"] \\
P_0 \arrow[ur,dashed,"\sigma"] \arrow[r,"g\rho"'] & (f_2).
\end{tikzcd}
\]
or equivalently,
$
f_2\sigma=f_1\rho.
$
On the other hand, since
\[
q\rho d_1=\pi d_1=0,
\]
we have
$
\rho d_1(P_1)\subseteq \ker q
=(f_2)+(0:f_1).
$
Consider the surjective $R$-linear map
\[
\phi:R\oplus (0:f_1)\twoheadrightarrow (f_2)+(0:f_1),
\qquad
\phi(a,v_0)=f_2a+v_0.
\]
Since $P_1$ is a projective $R$-module and
$
\rho d_1(P_1)\subseteq (f_2)+(0:f_1),
$
the map
$$
\rho d_1:P_1\longrightarrow (f_2)+(0:f_1)
$$
lifts through $\phi$. Hence there exist $R$-linear maps
\[
u:P_1\longrightarrow R,
\qquad
v:P_1\longrightarrow (0:f_1)
\]
such that
$
\phi\circ (u,v)=\rho d_1,
$
or the following diagram is commutative
\[
\begin{tikzcd}[column sep=large,row sep=large]
& R\oplus U \arrow[d,two heads,"\phi"] \\
P_1 \arrow[ur,dashed,"{(u,v)}"] \arrow[r,"\rho d_1"'] & (f_2)+(0:f_1).
\end{tikzcd}
\]
Equivalently,
$$
\rho d_1=f_2u+v.
$$
Set
\[
z:=\sigma d_1-f_1u:P_1\longrightarrow R.
\]
Then
\begin{align*}
f_2z
&=f_2\sigma d_1-f_2f_1u\\
&=f_1\rho d_1-f_1f_2u
&&\text{since } f_2\sigma=f_1\rho\\
&=f_1(f_2u+v)-f_1f_2u
&&\text{since } \rho d_1=f_2u+v\\
&=f_1v\\
&=0,
\end{align*}
because
$
v(P_1)\subseteq 0:f_1.
$
Therefore
$
z(P_1)\subseteq 0:f_2.
$
Let
\[
q_A:0:f_2\twoheadrightarrow A,\qquad
q_A(a)=a+f_1(0:f_1f_2),
\]
denote the canonical quotient map and set
 $$\alpha_1:= q_A \circ z:  P_1 \To A. $$
We have $\alpha_1$ is a cocycle of extension $\xi$. Indeed, from $f_2\sigma = f_1 \rho$, we get $\sigma(F_0)\subseteq (f_1):f_2$, we set
$$\alpha_0: P_0 \To \dfrac{(f_1):f_2}{(f_1)},$$ with $\alpha_0(x) = \sigma(x) + (f_1)$. By the definition of $p$, we get
$$p\alpha_0= q\rho = \pi.$$
On the other hand, $\sigma d_1 = z + f_1u$, for every $x \in P_1$  we have
$$\alpha_0d_1(x) = \sigma d_1(x) + (f_1) = z(x) + (f_1) = i \alpha_1(x).$$
Thus, $\alpha_0d_1 = i \alpha_1$. Hence, 
$i\alpha_1d_2 = \alpha_0d_1d_2 =0$ and $i$ is injective, so $\alpha_1d_2 =0$.
We conclude that  $\alpha_1$ is a cocycle of the extension $\xi$. 

Second, we find the cocycle $\alpha_1'$ of the extension $\xi'$.
From the short exact sequence (\ref{exact1}), since
\[
J^{a_2}\left(\dfrac{(f_1):f_2}{(f_1)}\right)=0,
\]
by Lemma \ref{Loewy}, it follows that $J^{a_2}C=0$, that is,
$
J^{a_2}((f_2):f_1)\subseteq (f_2)+(0:f_1).
$
Hence
\[
J^{a_1+a_2}((f_2):f_1)
\subseteq
J^{a_1}((f_2)+(0:f_1))
\subseteq (f_2).
\]
Since $\varepsilon\in J^N\subseteq J^{a_1+2a_2}$, we can write
\[
\varepsilon=\sum_{\nu=1}^t a_\nu\eta_\nu,
\qquad
a_\nu\in J^{a_2},
\qquad
\eta_\nu\in J^{a_1+a_2}.
\]
For each $\nu=1,\ldots,t$, we have
$
\eta_\nu((f_2):f_1)\subseteq (f_2).
$
Since $P_0$ is a projective $R$-module, for each $\nu$ there exist a map
$
\tau_\nu:P_0\to R
$
such that
$
f_2\tau_\nu=\eta_\nu\rho.
$
That is the following diagram is commutative
\[
\begin{tikzcd}[column sep=large,row sep=large]
& R \arrow[d,two heads,"\mu_{f_2}"] \\
P_0 \arrow[ur,dashed,"\tau_\nu"] \arrow[r,"\eta_\nu\rho"'] & (h)
\end{tikzcd}
\]
Let
\[
\tau:=\sum_{\nu=1}^t a_\nu\tau_\nu.
\]
Then
\[
f_2\tau
=
f_2\left(\sum_{\nu=1}^t a_\nu\tau_\nu\right)
=
\sum_{\nu=1}^t a_\nu f_2\tau_\nu
=
\sum_{\nu=1}^t a_\nu\eta_\nu\rho
=
\varepsilon\rho.
\]
Set
$
\sigma':=\sigma+\tau.
$
Then
$
f_2\sigma'
=
f_2\sigma+f_2\tau
=
f_1\rho+\varepsilon\rho
=
f_1'\rho.
$
It follows that
\[
\sigma'(P_0)\subseteq (f_1'):f_2.
\]
Let
\[
u:P_1\to R
\qquad\text{and}\qquad
v:P_1\to (0:f_1)=(0:f_1')
\]
be as above and set
$
z':=\sigma'd_1-f_1'u:P_1\to R.
$
We have
\begin{align*}
f_2z'
&=f_2\sigma'd_1-f_2f_1'u\\
&=f_1'\rho d_1-f_1'f_2u\\
&=f_1'(f_2u+v)-f_1'f_2u\\
&=f_1'v\\
&=0.
\end{align*}
Thus
$
z'(P_1)\subseteq 0:f_2.
$
Let
$
q_A:0:f_2\twoheadrightarrow A
$
denote the canonical quotient map and set
$$
\alpha_1':=q_A\circ z':P_1\longrightarrow A.
$$
Define
$
\alpha_0':P_0\longrightarrow  \dfrac{(f'_1):f_2}{(f'_1)}
$
by
$
\alpha_0'(x):=\sigma'(x)+(f_1').
$
Since
$
f_2\sigma'=f_1'\rho,
$
we have
$
p'\alpha_0'=q\rho=\pi,
$
so $\alpha_0'$ is a lifting of $\pi$.

As in the construction of $\alpha_1$, we have
$
i'\alpha_1'=s'd_1.
$
Hence
$
i'\alpha_1'd_2=s'd_1d_2=0.
$
Since $i'$ is injective,
$
\alpha_1'd_2=0.
$
Therefore $\alpha_1'$ is a $1$-cocycle of the extension class
$\xi'$.

Finaly, we will prove $\alpha_1 = \alpha_1'$. For each
$\nu=1,\ldots,t$, set
\[
w_\nu:=\tau_\nu d_1-\eta_\nu u:P_1\to R.
\]
Since $
\rho d_1=f_2u+v,
$
we have
\begin{align*}
f_2w_\nu
&=f_2\tau_\nu d_1-\eta_\nu f_2u\\
&=\eta_\nu\rho d_1-\eta_\nu f_2u\\
&=\eta_\nu(\rho d_1-f_2u)\\
&=\eta_\nu v.
\end{align*}
Since
$
v(P_1)\subseteq 0:f_1
$
and
$
\eta_\nu\in J^{a_1+a_2}\subseteq J^{a_1},
$
while
$
J^{a_1}(0:f_1)=0,
$
we obtain
$
\eta_\nu v=0.
$
Hence
$
w_\nu(P_1)\subseteq 0:f_2,
$
so that
$
q_A\circ w_\nu:P_1\to A
$
is well defined.\\
Moreover,
\begin{align*}
z'-z
&=(\sigma'd_1-f_1'u)-(\sigma d_1-f_1u)\\
&=\tau d_1-\varepsilon u\\
&=\sum_{\nu=1}^t
a_\nu(\tau_\nu d_1-\eta_\nu u)\\
&=\sum_{\nu=1}^t a_\nu w_\nu.
\end{align*}
Therefore,
\[
\alpha_1'-\alpha_1
=
q_A\circ(z'-z)
=
\sum_{\nu=1}^t a_\nu q_A\circ w_\nu.
\]
From the short exact sequence (\ref{exact1}), since
$
J^{a_2}\left(\dfrac{(f_1):f_2}{(f_1)}\right)=0,
$
by Lemma \ref{Loewy},
we have
$
J^{a_2}A=0.
$
Since $a_\nu\in J^{a_2}$ for every $\nu$, it follows that
\[
a_\nu q_A\circ w_\nu=0.
\]
Consequently,
\[
\alpha_1'=\alpha_1.
\]
Thus $\xi$ and $\xi'$ are represented by the same cocycle in
$\Hom_R(\mathbf{P},A)$. Hence
$
[\xi]=[\xi']$ in
$
\Ext_R^1(C,A).
$
Therefore the two extensions are equivalent. In particular, there exists
an $R$-module isomorphism $\varphi: \dfrac{(f_1):f_2}{(f_1)}\to \dfrac{(f'_1):f_2}{(f'_1)}$.

(iv). From the proof \cite[Lemma 3.1]{Tanarxiv} or \cite[Proposition 3.4]{QNVT}, we get $a_J\left(\dfrac{(f_2):f_1}{(f_2)}\right) \leq a_1+a_2$. Thus $f_1$ is a $J$-filter regular element in $R'=R/(f_2)$. On the other hand, $\ar_{JR'}(f_1R') \leq \ar_J(f_1,f_2)$. Therefore, 
$$N \geq \max{a_J(0_{R'}:f_1R'),\ar_{JR'}(f_1R')+1}.$$
Applies Lemma \ref{lem:one} (ii), we get $$\inn(f_1'R') = \inn(f_1R').$$
By Lemma \ref{eq:AR-initial}, this implies
$$\inn(f_1',f_2) = \inn(f_1,f_2).$$ 
The proof is completed.

\end{proof}

\begin{Lemma}\label{extend}
  Let $(R,\mm)$ be a Noetherian local ring and $J$ an ideal of $R$. Let $f_1,\ldots,f_r$ be a $J$-filter regular sequence of $R$ and $I=(f_1,\ldots,f_r)$. Let
  $$a_i=a_J\left(\dfrac{(f_1,\ldots,f_{i-1}):f_i}{(f_1,\ldots,f_{i-1})}\right)$$
  for $i=1,\ldots,r$. Set
  $$N=\max\{a_1+a_2+ \cdots +a_{i-1}+2a_{i}, \ar_J(f_1),\ldots,\ar_J(f_1,\ldots,f_{i})\}+1.$$
  Then

  {\rm (i)} $f_1',f_2,\ldots,f_{r}$ is filter regular sequence for all $\varepsilon \in J^N$ with
   $$a_J((f_1',f_2,\ldots,f_{i-1}):f_{i}/(f_1',f_2,\ldots,f_{i-1})=a_{i}$$
   for all $i=1,\ldots,r$.

  {\rm (ii)} $\inn(f_1',\ldots,f_i)=\inn(f_1,\ldots,f_i)$ for all $i=1,\ldots,r$.

\end{Lemma}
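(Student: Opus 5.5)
We argue by induction on $i$: the case $i=1$ is Lemma~\ref{lem:one}, and the case $i=2$ is Proposition~\ref{perturb1}. The statement is best read with $N$ taken as the maximum over all $i=1,\ldots,r$ of the displayed expressions, i.e.\ $N\ge a_1+\cdots+a_{i-1}+2a_i+1$ and $N\ge \ar_J(f_1,\ldots,f_i)+1$ for every $i$. The idea is to pass, for each $i\ge 2$, to the quotient ring $\Rbar=R/(f_2,\ldots,f_{i-1})$ and apply Proposition~\ref{perturb1} there to the pair $\bar f_1,\bar f_i$. This works because
\[
\frac{(f_1,\ldots,f_{i-1}):f_i}{(f_1,\ldots,f_{i-1})}=\frac{(\bar f_1):_{\Rbar}\bar f_i}{(\bar f_1)},
\]
and similarly with $f_1'$ in place of $f_1$. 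So the Loewy length $a_i$ is exactly the ``$a_2$'' of the pair $\bar f_1,\bar f_i$ in $\Rbar$.

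The key input is the ``$a_1$'' of that pair, namely $a_{\Jbar}(0:_{\Rbar}\bar f_1)=a_J\bigl(((f_2,\ldots,f_{i-1}):f_1)/(f_2,\ldots,f_{i-1})\bigr)$. I would show this is at most $a_1+\cdots+a_{i-1}$ by induction, moving $f_1$ past $f_2,\ldots,f_{i-1}$ one step at a time. The tool is the exact sequence (\ref{exact1}) applied in $R/(f_2,\ldots,f_{j-1})$ to the pair $f_1,f_j$. Its right term $C$ has Loewy length at most that of the middle term, which is $a_j$ by the identification above. Its kernel $\bigl((f_j):f_1\bigr)\cap\ldots$ modulo $\bigl((f_j)+(0:f_1)\bigr)$ is controlled by the previous stage. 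Together these give the additive estimate used in the proof of Proposition~\ref{perturb1}(iv): $a_{\Jbar}(0:\bar f_1)$ in $R/(f_2,\ldots,f_j)$ is at most $a_{\Jbar}(0:\bar f_1)$ in $R/(f_2,\ldots,f_{j-1})$ plus $a_j$. The same estimate shows $f_2,\ldots,f_{i-1}$ and $\bar f_1,\bar f_i$ remain $J$-filter regular in the relevant quotients. By Lemma~\ref{lem:ARquot}, the Artin--Rees numbers in $\Rbar$ are bounded by $\ar_J(f_1,\ldots,f_{i-1})$ and $\ar_J(f_1,\ldots,f_i)$.

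With $a_{\Jbar}(0:\bar f_1)\le a_1+\cdots+a_{i-1}$, the threshold required by Proposition~\ref{perturb1} in $\Rbar$ is at most $\max\{a_1+\cdots+a_{i-1}+2a_i,\ \ar,\ \ar\}+1\le N$. The proposition then gives the isomorphism of colon quotients, hence equality of $a_J$ (part (i)) and $J$-filter regularity of $f_1',f_2,\ldots,f_i$. It also gives $\inm(\bar f_1',\bar f_i)=\inm(\bar f_1,\bar f_i)$ in $\gr_{\Jbar}(\Rbar)$. Lemma~\ref{lem:ARin} lifts this to $\inm(f_1',\ldots,f_i)=\inm(f_1,\ldots,f_i)$ in $\gr_J(R)$, since $\inm(I/K)=\inm(I)/\inm(K)$ with $K=(f_2,\ldots,f_{i-1})$ common to both; this is part (ii).

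The main obstacle is the Loewy-length bound $a_J\bigl(((f_2,\ldots,f_{i-1}):f_1)/(f_2,\ldots,f_{i-1})\bigr)\le a_1+\cdots+a_{i-1}$, together with the claim that $f_2,\ldots,f_{i-1}$ is itself $J$-filter regular, which is needed to run the induction. I would first try to deduce both from the exact sequence (\ref{exact1}) in the successive quotients $R/(f_2,\ldots,f_{j-1})$. The sequence (\ref{exact1}) controls how $f_1$ commutes past each $f_j$, but it must be checked that its left term does not contribute an extra summand at each stage. Should it contribute, the resulting bound would still be the one needed, because only $a_1+\cdots+a_{i-1}$ is allowed while $a_i$ appears twice.
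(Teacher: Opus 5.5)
Your proposal is correct and follows the paper's proof. You pass to $R/(f_2,\ldots,f_{i-1})$, use the bound $a_J\bigl(((f_2,\ldots,f_{i-1}):f_1)/(f_2,\ldots,f_{i-1})\bigr)\le a_1+\cdots+a_{i-1}$ (which the paper cites from Quy and N.~V.~Trung, and which your iterated estimate correctly supplies), apply Proposition~\ref{perturb1} to $\bar f_1,\bar f_i$ with the Artin--Rees bounds from Lemma~\ref{lem:ARquot}, and lift initial ideals via Lemma~\ref{lem:ARin}.

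The worries in your last paragraph are unnecessary. Each step only uses $0\to((f_j)+(0:f_1))/(f_j)\to((f_j):f_1)/(f_j)\to C\to 0$ in $R/(f_2,\ldots,f_{j-1})$, whose left term is a quotient of $0:f_1$ and whose right term is a quotient of the middle term of (\ref{exact1}). So the term $A$ never enters, and $J$-filter regularity of $f_2,\ldots,f_{i-1}$ in $R$ is never used. That regularity can in fact fail: take $f_1=x+y$, $f_2=z$ in $k[[x,y,z]]/(xy,xz)$ with $J=\mm$. Your fallback would also not have worked, since extra summands $a_j$ with $j<i$ cannot be absorbed by the second copy of $a_i$.
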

\begin{proof}
(i). Firstly, we will prove that $f_1$ is a $J$-filter regular in $\overline{R}=R/(f_2,\ldots,f_i)$ for $i=1,\ldots,r$. The case $r=1$ is proved in Lemma \ref{lem:one}. For $r \geq 2$, by the proof of \cite[Propostion 3.4 and Theorem 3.5]{QNVT} we have
  $$a_J((f_2,\ldots,f_i):f_1/(f_2,\ldots,f_i))\leq a_1+\cdots + a_i.$$
  From this follows that $f_1$ is a $J$-filter regular in $\overline{R}_i=R/(f_2,\ldots,f_i)$ for $i=1,\ldots,r$. Notice that $f_2,\ldots,f_i)=0$ if $i=1$.

  For $i \in \{1,\ldots,r-1\}$, by hypothesis $f_{i+1}$ is a $J$-filter regular sequence in $R/(f_1,\ldots,f_i)$. Therefore, $f_1, f_{i+1}$ is a $J$-filter regular sequence in $\overline{R}_i$ with
  $$a_J(0_{\overline{R}_i}:f_1)=a_J((f_2,\ldots,f_i):f_1/(f_2,\ldots,f_i))\leq a_1+\cdots + a_i$$
  and
  $$a_J(f_1\overline{R}_i:f_{i+1}/f_1\overline{R}_i)=a_J((f_1,\ldots,f_i):f_{i+1}/(f_1,\ldots,f_i))=a_{i+1}.$$
  On the other hand, by \cite[Lemma 2.6]{QNVT}, $$\ar_{J\overline{R}_i}(f_1\overline{R}_i) \leq \ar_J(f_1,\ldots,f_i) \ \ \text{and} \ \ \ar_{J\overline{R}_i}((f_1,f_{i+1})\overline{R}_i) \leq \ar_J(f_1,\ldots,f_{i+1}).$$
  It follows that
  $$N \geq \max\{a_J(0_{\overline{R}_i}:f_1)+2a_J(f_1\overline{R}_i:f_{i+1}/f_1\overline{R}_i),
  \ar_{J\overline{R}_i}(f_1\overline{R}_i), \ar_{J\overline{R}_i}((f_1,f_{i+1})\overline{R}_i) \}+1.$$
  By Proposition \ref{perturb1}(i), we have $f_1',f_{i+1}$ is a $J$-filter regular sequence on $\overline{R}_i$ with
  $$a_J(0_{\overline{R}_i}:f_1')=a_J(0_{\overline{R}_i}:f_1)=a_J((f_2,\ldots,f_i):f_1/(f_2,\ldots,f_i) \leq a_1+\cdots a_i$$
  and
  $$a_J(f_1'\overline{R}_i:f_{i+1}/f_1'\overline{R}_i)=
  a_J(f_1\overline{R}_i:f_{i+1}/f_1\overline{R}_i)=a_J((f_1,\ldots,f_i):f_{i+1}/(f_1,\ldots,f_i))=a_{i+1}.$$
  That is
  $$a_J((f_1',f_2,\ldots,f_i):f_{i+1}/(f_1',\ldots,f_i)=a_{i+1}$$
  for $i=1,\ldots,r-1$.
  It follows that $f_1',f_2,\ldots,f_r$ is a filter regular sequence.
  
  (ii). By Proposition \ref{perturb1}, $$\inn((f_1',f_{i+1})\overline{R}_i)=\inn((f_1,f_{i+1})\overline{R}_i).$$
  Hence, by \cite[Lemma 2.2]{QNVT}
  $$\inn(f_1',f_2,\ldots,f_{i+1})=\inn(f_1,\ldots,f_{i+1})$$
  for $i=1,\ldots,r-1$.
  By Lemma \ref{lem:one}, we have $f_1'$ is a $J$-filter regular element in $R$ and $(0:f_1')=(0:f_1)$. Therefore,
  $$\inn(f_1',f_2,\ldots,f_i) = \inn(f_1,f_2,\ldots,f_i)$$
  for $i=1,\ldots, r$. The proof is now complete.
\end{proof}
We are now in the posision to derive the main results of this paper. 
\begin{Theorem}\label{main}
  Let $(R,\mm)$ be a Noetherian local ring and $J$ an ideal of $R$. Let $f_1,\ldots,f_r$ be a $J$-filter regular sequence of $R$ and $I=(f_1,\ldots,f_r)$. Let
  $$a_i=a_J((f_1,\ldots,f_{i-1}):f_i/(f_1,\ldots,f_{i-1}))$$
  for $i=1,\ldots,r$.
  Set
$$
B_r=
\begin{cases}
a_1, & r=1,\\[2mm]
a_1+a_2, & r=2,\\[2mm]
\displaystyle
\max_{2\le i\le r}
\left\{
a_1+\cdots+a_{i-1}+2a_i
\right\},
& r\ge 3.
\end{cases}
$$
and
  $$N=\max\{B_r, \ar_J(f_1),\ldots,\ar_J(f_1,\ldots,f_r)\}+1.$$
  Then for every $I'=(f',\ldots,f_r')$ where $f_i'=f_i+\varepsilon_i$, $\varepsilon_i \in J^N$ for $i=1,\ldots,r$, we have

    {\rm (i)} $f_1',\ldots,f_r'$ is a $J$-filter regular sequence and
    $$a_J((f_1',\ldots,f_{i-1}'):f_i'/(f_1',\ldots,f_{i-1}'))=a_i$$
    for $i=1,\ldots,r$.

    {\rm (ii)} $\inn(f_1',\ldots,f_i')=\inn(f_1,\ldots,f_i)$ for each $i=1,\ldots,r$.

    {\rm (iii)} In particular, $\inn(I') = \inn(I)$, $\gr_J(R/I') \cong \gr_J(R/I)$,  $\ar_J(I')=\ar_J(I)$.

\end{Theorem}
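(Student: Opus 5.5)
We argue by induction on $r$, perturbing one element at a time and applying Lemma \ref{extend} at each step. For $r=1$ the statement is Lemma \ref{lem:one}: $N\ge \max\{a_1,\ar_J(f_1)+1\}$ gives $0:f_1'=0:f_1$ and $\inn(f_1')=\inn(f_1)$. For $r=2$ we combine Lemma \ref{cong1} and Proposition \ref{perturb1}. There is one subtlety: $B_2=a_1+a_2$, whereas Proposition \ref{perturb1} uses $a_1+2a_2$. We handle this by perturbing $f_2$ first, working in $R/(f_1)$ where Lemma \ref{lem:one} needs only $a_2$ and $\ar_J(f_1,f_2)+1$. Only then do we perturb $f_1$, using Lemma \ref{cong1}, which is exactly tailored to $a_1+a_2$. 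If this ordering does not close the gap, we fall back on the bound of Proposition \ref{perturb1}.

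\textbf{Inductive step ($r\ge 3$).} We replace $f_1,\ldots,f_r$ by $f_1',\ldots,f_r'$ in $r$ stages.

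First we perturb $f_1$. Since $N-1\ge \max_i\{a_1+\cdots+a_{i-1}+2a_i\}$ and $N-1\ge \ar_J(f_1,\ldots,f_i)$ for all $i$, Lemma \ref{extend} applies. It shows that $f_1',f_2,\ldots,f_r$ is again $J$-filter regular, with the same Loewy lengths $a_i$, and that $\inn(f_1',f_2,\ldots,f_i)=\inn(f_1,\ldots,f_i)$ for all $i$. By Lemma \ref{lem:ARin}, the Artin--Rees numbers $\ar_J(f_1',f_2,\ldots,f_i)$ are also unchanged.

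Next we pass to $\Rbar=R/(f_1')$, with $\Jbar=J\Rbar$. In $\Rbar$ the images of $f_2,\ldots,f_r$ form a $\Jbar$-filter regular sequence whose Loewy lengths are $a_2,\ldots,a_r$. Their Artin--Rees numbers are bounded by those of $(f_1',f_2,\ldots,f_i)$ via Lemma \ref{lem:ARquot}, and hence by the original ones. The quantity $B_{r-1}$ for the sequence $a_2,\ldots,a_r$ is at most $B_r$, since every term $a_2+\cdots+a_{i-1}+2a_i$ is dominated by $a_1+\cdots+a_{i-1}+2a_i$. Therefore the same $N$ satisfies the hypothesis of the theorem for $\overline{f_2},\ldots,\overline{f_r}$ in $\Rbar$.

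By induction, $\overline{f_2'},\ldots,\overline{f_r'}$ is $\Jbar$-filter regular with Loewy lengths $a_2,\ldots,a_r$, and $\inn(\overline{f_2'},\ldots,\overline{f_i'})=\inn(\overline{f_2},\ldots,\overline{f_i})$. Lifting back along $R\to\Rbar$ gives
$$\frac{(f_1',\ldots,f_{i-1}'):f_i'}{(f_1',\ldots,f_{i-1}')}\cong \frac{(\overline{f_2'},\ldots,\overline{f_{i-1}'}):\overline{f_i'}}{(\overline{f_2'},\ldots,\overline{f_{i-1}'})},$$
which yields (i).

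For (ii), the first statement of Lemma \ref{lem:ARin} gives
$$\inn(f_1',\ldots,f_i')/\inn(f_1')=\inn(f_1',f_2,\ldots,f_i)/\inn(f_1').$$
Hence $\inn(f_1',\ldots,f_i')=\inn(f_1',f_2,\ldots,f_i)=\inn(f_1,\ldots,f_i)$.

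Part (iii) then follows: take $i=r$, use $\gr_J(R/I)\cong\gr_J(R)/\inn(I)$, and apply \eqref{eq:AR-initial}.

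\textbf{Main obstacle.} The delicate point is the bookkeeping in the reduction to $\Rbar$. We must check that passing to the quotient preserves both the hypotheses on the Loewy lengths and the Artin--Rees bounds, and that the induced perturbation in $\Rbar$ has the form $\overline{f_j}+\overline{\varepsilon_j}$ with $\overline{\varepsilon_j}\in \Jbar^N$. That last requirement is immediate. The $r=2$ base case with the smaller constant $a_1+a_2$ needs separate care, as described above.
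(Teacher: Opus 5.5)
Your inductive step for $r\ge 3$ is the paper's own argument. You perturb $f_1$ by Lemma~\ref{extend} and pass to $R/(f_1')$. There you check that the Loewy lengths, the Artin--Rees numbers (Lemmas~\ref{lem:ARquot} and \ref{lem:ARin}) and $B_{r-1}$ stay under control, and you lift back with $\inn(I/K)=\inn(I)/\inn(K)$.

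The gap is the base case $r=2$ with the stated constant $B_2=a_1+a_2$. The paper does not derive this from its own lemmas; it takes it from \cite[Theorem 3.3]{Tanarxiv}. Your derivation does not produce it.

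\begin{itemize}
\item Perturbing $f_2$ first is harmless: Lemma~\ref{lem:one} on $R/(f_1)$ gives $(f_1):f_2'=(f_1):f_2$. But the pair $f_1,f_2'$ has the same $a_1,a_2$ and Artin--Rees numbers, so perturbing $f_1$ afterwards is exactly the original problem.
\item Lemma~\ref{cong1} only says that $(f_1):f_2/(f_1)$ and $(f_1'):f_2/(f_1')$ are extensions of the same $A$ by the same $C$. From that alone you get $\max\{a_J(A),a_J(C)\}\le a_J\bigl((f_1'):f_2/(f_1')\bigr)\le a_J(A)+a_J(C)\le 2a_2$. This gives filter regularity, but not the equality with $a_2$ required in (i).
\item That equality comes from $[\xi]=[\xi']$ in Proposition~\ref{perturb1}. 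Its proof needs the factorization $\varepsilon=\sum a_\nu\eta_\nu$ with $a_\nu\in J^{a_2}$ and $\eta_\nu\in J^{a_1+a_2}$, i.e.\ $\varepsilon\in J^{a_1+2a_2}$.
\end{itemize}
So your fallback proves the case $r=2$ only with $a_1+2a_2$ in place of $B_2$, which is weaker than what the theorem asserts.

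The damage is confined to $r=2$, and within it to the Loewy-length equality in (i). Part (ii) for $r=2$ does hold with $a_1+a_2$ in your order. After $f_2\mapsto f_2'$, apply Lemma~\ref{lem:one} in $R/(f_2')$, using $a_J\bigl((f_2'):f_1/(f_2')\bigr)\le a_1+a_2$ as in the proof of Proposition~\ref{perturb1}(iv); this gives $\inn(f_1',f_2')=\inn(f_1,f_2')=\inn(f_1,f_2)$.

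For $r\ge 3$ the weaker $r=2$ statement is enough. The only length-two instance the induction reaches is $f_{r-1},f_r$ modulo $(f_1',\ldots,f_{r-2}')$. There the fallback needs $a_{r-1}+2a_r\le a_1+\cdots+a_{r-1}+2a_r\le B_r$, which holds.

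To prove the theorem as stated you must still supply the case $r=2$ with $a_1+a_2$. You can cite \cite[Theorem 3.3]{Tanarxiv} as the paper does, or give a new argument that $a_J\bigl((f_1'):f_2'/(f_1')\bigr)=a_2$ when $\varepsilon_1\in J^{a_1+a_2+1}$.
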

\begin{proof}
  (i). We will prove by induction on $r$. The case $r=1$ has been showed in \cite[Proposition 3.2]{QNVT}. The case $r=2$ has been showed in \cite[Theorem 3.3]{Tanarxiv}

  In the case $r \geq 3$. 
  We have 
  $$N \geq \max \{a_1+\cdots + a_{i-1}+2a_{i},\ar_J(f_1),\ar_J(f_1,\ldots,f_{i})\}+1$$
  By Lemma \ref{extend},  we have  $f_1',f_2,\ldots,f_r$ is a $J$-filter regular sequence in $R$ with 
  $$a_J\left(\dfrac{(f_1',f_2,\ldots,f_{i-1}):f_{i}}{(f_1',f_2,\ldots,f_{i-1})}\right)=a_{i}$$
  and
  \begin{equation}\label{in1}
    \inn(f_1',\ldots,f_{i}) = \inn(f_1,\ldots,f_{i})
  \end{equation}
  for $i=1,\ldots,r$.
   Set $R'=R/(f_1')$. Using induction on $r$, we may assume that
\begin{enumerate}[label=\textup{(\roman*')}]
\item $f_2',\ldots,f_r'$ is a $J$-filter regular sequence in $R'$ with
$$a_J\left(\dfrac{(f_2',\ldots,f'_{i-1})R':f_{i}'}{(f_2',\ldots,f'_{i-1})R'}\right) = a_{i}$$
for $i=2,\ldots, r$.
\item $\inn((f_2',\ldots,f'_{i})R') = \inn((f_2,\ldots,f_{i})R')$ for $i=2,\ldots, r.$ 
\end{enumerate}   
  
  By Lemma \ref{lem:one}, we have $f_1$ is a $J$-filter regular element with $(0:f_1')=(0:f_1)$. From this and (i'), we get
  $f_1',\ldots,f_r'$ is a $J$-filter regular sequence with
  $$a_J\left(\dfrac{(f_1',f_2'\ldots,f'_{i-1})R':f_{i}'}{(f_1',f_2'\ldots,f'_{i-1})R'}\right) = a_{i+1}$$
  for $i=1,\ldots, r$. By Lemma \ref{lem:ARin}, from (ii') and   $\inn(f_1')=\inn(f_1)$ we have
  \begin{equation}\label{inall}
    \inn(f_1',\ldots,f_{i}') = \inn(f_1',f_2,\ldots,f_{i})
  \end{equation}
  for $i=1,\ldots,r$.
From (\ref{in1}) and (\ref{inall}), we have
  $$\inn(f_1',\ldots,f_{i}') = \inn(f_1,\ldots,f_{i})$$ 
    for $i=1,\ldots,r$. 
  
  (iii). By \cite[Lemma 2.1]{QNVT}, $\gr_J(R/I) \cong \gr_J(R)/\inn(I)$ and $\gr_J(R/I') \cong \gr_J(R)/\inn(I')$. From (ii), $\inn(I')=\inn(I)$. It follows that $$\gr_J(R/I') \cong \gr_J(R/I).$$

  (iv). By \cite[Proposition 2.3]{QNVT}, $\ar_J(I)=d(\inn(I))$ and $\ar_J(I')=d(\inn(I'))$. From (ii), we get $\ar_J(I')=\ar_J(I)$.
  Theorem is proved.
\end{proof}
We use \cite[Example 3.5]{Tanarxiv} to illustrate the improvement of our bound over the known results.  
\begin{Example}\label{example}

Let $k$ be a field and 
$$
 A=k[[x_1,\ldots,x_r,y]],
 \qquad
 \mathfrak q=(x_1,\ldots,x_r,y).
$$
Regard $k=A/\mathfrak q$ as an $A$-module and form the idealization
\[
 R=A\ltimes k=A\oplus k,
\]
with multiplication
\[
 (a,\lambda)(b,\mu)=(ab,\overline a\mu+\overline b\lambda),
\]
where bars denote residue classes in $k=A/\mathfrak q$.  Then $R$ is a Noetherian local ring with maximal ideal $\mathfrak q\ltimes k$.

Set
\[
 J=((y,0))R
 \qquad\text{and}\qquad
 f_i=(x_i,0),\quad i=1,\ldots,r.
\]
For $I_i=(x_1,\ldots,x_i)A$ and $F_i=(f_1,\ldots,f_i)R$, a direct computation gives
\[
 F_i=I_i\oplus0,
 \qquad
 a_i=a_J\left(\frac{F_{i-1}:f_i}{F_{i-1}}\right)=1,
 \qquad
 \ar_J(F_i)=0
 \qquad(i=1,\ldots,r).
\]
Consequently, the bound of Quy and N. V. Trung \cite{QNVT} gives
$$
N_{\QNVT}=2^r-1,
$$
whereas the bound of Tan \cite{Tanarxiv} gives
$$
N_{\mathrm{Tan}}=2^{r-1}.
$$
On the other hand, the bound in Theorem \ref{main} gives
$$
N=r+1.
$$
Thus, the dependence on $r$ is reduced from exponential to linear.
\end{Example}

\begin{Corollary}
Let $(R,\mm)$ be a Noetherian local ring and $J$ an arbitrary ideal of $R$  and let
$f_1,\ldots,f_r$ be a $J$-filter regular sequence.  For $i=1,\ldots,r$, let $$a_i =  a_J\!\left(
 \dfrac{(f_1,\ldots,f_{i-1}):f_i}{(f_1,\ldots,f_{i-1})}
 \right). $$ 
 Set 
$b:=\max_{1\leq i \leq r}\{a_i\}$ and
\begin{equation}\label{eq:intro-new-bound}
 N_1
 =\max\left\{
 a_1+\cdots+a_r+b,
 \ar(f_1),\ldots,\ar(f_1,\ldots,f_r)
 \right\}+1.
\end{equation}
For all $\varepsilon_1,\ldots,\varepsilon_r\in J^{N}$ and $f_i'=f_i+\varepsilon_i$ for $i=1,\ldots,r$. Then
\begin{enumerate}[label=\textup{(\roman*)}]
\item $f_1',\ldots,f_r'$ is a $J$-filter regular sequence with  $$a_J\left(\dfrac{(f_1',\ldots,f_{i-1}'):f_i')}{(f_1',\ldots,f_{i-1}')}\right) = a_i$$ for $i=1,\ldots,r$.
\item  For  $i=1,\ldots,r$, we have
$
 \inm(f_1',\ldots,f_i'))=\inm(f_1,\ldots,f_i).$

\item In particular, 
$
 \gr_J(R/I')\cong\gr_J(R/I)$ and  $\ar_J(I')=\ar_J(I).
$
\end{enumerate}
\end{Corollary}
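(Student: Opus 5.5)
The Corollary follows from Theorem~\ref{main} once we show $N_1\ge N$, where $N$ is the integer of Theorem~\ref{main}. The $\ar_J$ terms in the two maxima are the same. It therefore suffices to prove
\[
B_r\le a_1+\cdots+a_r+b .
\]
Then $J^{N_1}\subseteq J^{N}$, so every choice $\varepsilon_1,\ldots,\varepsilon_r\in J^{N_1}$ is admissible in Theorem~\ref{main}. Conclusions (i)--(iii) are then exactly statements (i)--(iii) of Theorem~\ref{main}, with (iii) coming from $\inn(I')=\inn(I)$ together with $\gr_J(R/I)\cong\gr_J(R)/\inn(I)$ and \eqref{eq:AR-initial}.

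The inequality is checked case by case from the definition of $B_r$.
\begin{itemize}
\item For $r=1$: $B_1=a_1\le a_1+b$.
\item For $r=2$: $B_2=a_1+a_2\le a_1+a_2+b$.
\item For $r\ge 3$: for each $2\le i\le r$, all $a_j\ge 0$ give
\[
a_1+\cdots+a_{i-1}+2a_i \le a_1+\cdots+a_i+b \le a_1+\cdots+a_r+b,
\]
using $a_i\le b$ in the first step. Taking the maximum over $i$ yields $B_r\le a_1+\cdots+a_r+b$.
\end{itemize}
These numbers are finite because the sequence is $J$-filter regular.

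The only step needing care is the citation of Theorem~\ref{main}, specifically its cases $r=1,2$, which rest on \cite{QNVT} and \cite{Tanarxiv}. I will take that theorem as stated, so the main obstacle reduces to the elementary numerical comparison above. I will also note that the statement says ``$\varepsilon_i\in J^{N}$''; I read this as $J^{N_1}$, and under either reading the inclusion $J^{N_1}\subseteq J^{N}$ finishes the argument.
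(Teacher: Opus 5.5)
Your proposal is correct and matches the paper's own proof: both reduce the Corollary to Theorem~\ref{main} by observing $N\le N_1$, so that $J^{N_1}\subseteq J^{N}$. Your case-by-case check of $B_r\le a_1+\cdots+a_r+b$, using $a_i\le b$ and $a_j\ge 0$, spells out the inequality that the paper asserts without comment.
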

\begin{proof}
Since $N\leq N_1$, we have $J^{N_1}\subseteq J^N$. Hence $\varepsilon\in J^{N_1}$ implies $\varepsilon\in J^N$.
\end{proof}
\section{Consequences and applications}\label{sec:applications}
Throughout this section $N$ is as in Theorem \ref{main} and $I'$ is any perturbation of order $N$.
If $f$ is $J$-filter regular in $R$ and
$$
c\ge\max\{a_J(0:f),\ar_J(f)+1\}+1,
$$
then, for every $\varepsilon\in J^c$ and $f'=f+\varepsilon$, one has
$$
\beta_j^R(R/(f))=\beta_j^R(R/(f'))
$$
and
$$
\mu_R^j(R/(f))=\mu_R^j(R/(f'))
$$
for all $j\ge0$; see \cite[Proposition 3.4]{VDT}.

Combining \cite[Proposition 3.4]{VDT} with the proof of Theorem \ref{main}, we obtain the following consequence.

\begin{Theorem}\label{thm:homological-app}
Let $I'=(f_1',\ldots,f_r')$, where
$$
f_i'=f_i+\varepsilon_i,\qquad \varepsilon_i\in J^{N+1}
$$
for $i=1,\ldots,r$. Then, for every $j\ge0$,
$$
\beta_j^R(R/I)=\beta_j^R(R/I')
$$
and
$$
\mu_R^j(R/I)=\mu_R^j(R/I').
$$
\end{Theorem}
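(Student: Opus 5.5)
We follow the structure of the proof of Theorem \ref{main} and replace the curvilinear step by the one-element result \cite[Proposition 3.4]{VDT}. The idea is to change one generator at a time and pass to quotient rings, so that at each stage a single filter regular element is perturbed. Set $I^{(k)}=(f_1',\ldots,f_k',f_{k+1},\ldots,f_r)$ for $k=0,\ldots,r$, so that $I^{(0)}=I$ and $I^{(r)}=I'$. It suffices to prove
$$
\beta_j^R(R/I^{(k-1)})=\beta_j^R(R/I^{(k)})\quad\text{and}\quad \mu_R^j(R/I^{(k-1)})=\mu_R^j(R/I^{(k)})
$$
for every $k$ and every $j\ge 0$.

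Fix $k$ and put $S=R/K$ with $K=(f_1',\ldots,f_{k-1}',f_{k+1},\ldots,f_r)$. Then $R/I^{(k-1)}=S/(f_kS)$ and $R/I^{(k)}=S/(f_k'S)$. In $S$ we want $f_k$ to be $J$-filter regular with
$$
a_{JS}(0_S:f_k)\le a_1+\cdots+a_r,\qquad \ar_{JS}(f_kS)\le \max_i \ar_J(f_1,\ldots,f_i)\le N-1.
$$
For the first bound:
\begin{itemize}
\item Theorem \ref{main}(i) and Lemma \ref{extend}(i), applied to the partially perturbed sequence, show that $f_1',\ldots,f_{k-1}',f_k,\ldots,f_r$ is $J$-filter regular with the same $a_i$. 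This uses that $N$ dominates every threshold needed along the way.
\item The permutation estimate from the proof of \cite[Proposition 3.4 and Theorem 3.5]{QNVT}, used in Lemma \ref{extend}, moves $f_k$ to the end. It gives $a_J\bigl(K:f_k/K\bigr)\le a_1+\cdots+a_r$.
\end{itemize}
For the second bound, Theorem \ref{main}(ii) gives $\inn(K+(f_k))=\inn(I^{(k-1)})=\inn(I)$. Then Lemma \ref{lem:ARquot} and Lemma \ref{lem:ARin} give $\ar_{JS}(f_kS)\le \ar_J(I^{(k-1)})=\ar_J(I)$. Since $\varepsilon_k\in J^{N+1}$ and
$$
N+1\ge \max\{a_{JS}(0_S:f_k),\ar_{JS}(f_kS)+1\}+1,
$$
\cite[Proposition 3.4]{VDT} in the ring $S$ yields $\beta_j^S(S/f_kS)=\beta_j^S(S/f_k'S)$ and likewise for Bass numbers.

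The remaining issue is that \cite[Proposition 3.4]{VDT} computes invariants over $S$, whereas we need them over $R$. This is the step I expect to be the main obstacle. I would handle it by reading the proof of \cite[Proposition 3.4]{VDT} more closely. That proof works with $R$-modules $M$ and a $J$-filter regular element on $M$, and it only needs $0:_Mf=0:_Mf'$ together with equality of initial submodules. So I would apply it to the $R$-module $M=S=R/K$, with $f_k,f_k'$ acting on $M$. The result is then directly $\beta_j^R(M/f_kM)=\beta_j^R(M/f_k'M)$, together with the analogue for Bass numbers.

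If the cited result is literally stated only for $M=R$, a fallback is available. The isomorphism $\gr_J(M/f_kM)\cong\gr_J(M/f_k'M)$ comes from Theorem \ref{main}(ii). Together with the equality $0:_Mf_k=0:_Mf_k'$ from Lemma \ref{lem:one}, it should let one rerun Trung's argument. That argument compares the two short exact sequences
$$
0\to M/(0:_Mf)\xrightarrow{f} M\to M/fM\to 0
$$
and uses $J$-adic closeness of the resulting long exact Tor sequences. Chaining the equalities over $k=1,\ldots,r$ finishes the proof.
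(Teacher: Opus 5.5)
Your proposal is correct and follows the paper's route. The paper's proof is only the instruction to apply \cite[Proposition 3.4]{VDT} one generator at a time in the quotient rings arising in the proof of Theorem \ref{main}, and your chain $I^{(0)},\ldots,I^{(r)}$ makes that explicit: you check the hypotheses in $S=R/K$ via Theorem \ref{main} applied to the partial perturbation (taking $\varepsilon_k=\cdots=\varepsilon_r=0$), the permutation estimate, and Lemmas \ref{lem:ARquot} and \ref{lem:ARin}.

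The passage from Betti and Bass numbers over $S$ to those over $R$, which you flag, is not addressed in the paper, and your fallback settles it without needing $\gr$. Put $M=R/K$ and $a=a_J(0:_Mf_k)$; then $0:_Mf_k=0:_Mf_k'$ by Lemma \ref{lem:one} in $S$. Writing $\varepsilon_k=\sum_\nu c_\nu\eta_\nu$ with $c_\nu\in J\subseteq\mm$ and $\eta_\nu\in J^{a}$ shows that multiplication by $f_k$ and by $f_k'$, as maps $M/(0:_Mf_k)\to M$, induce the same maps on $\operatorname{Tor}^R_j(-,R/\mm)$ and $\Ext^j_R(R/\mm,-)$. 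So the two long exact sequences coming from $0\to M/(0:_Mf)\to M\to M/fM\to 0$ give equal Betti and Bass numbers.
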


\begin{proof}
The result follows by applying \cite[Proposition 3.4]{VDT} successively as in the proof of Theorem \ref{main}.
\end{proof}

\begin{Corollary}[Hilbert-Samuel functions]
If $J$ is $\mm$-primary, then
$$
\ell\bigl(R/(I+J^n)\bigr)=\ell\bigl(R/(I'+J^n)\bigr)
\qquad(n\ge0).
$$
\end{Corollary}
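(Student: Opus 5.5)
The plan is to deduce the corollary from part (iii) of Theorem~\ref{main}, via the standard description of the Hilbert--Samuel function through the associated graded ring. Since $I'$ is a perturbation of order $N$, Theorem~\ref{main}(iii) gives $\inm(I')=\inm(I)$ in $\gr_J(R)$. Hence there is an isomorphism of graded rings $\gr_J(R/I)\cong\gr_J(R)/\inm(I)=\gr_J(R)/\inm(I')\cong\gr_J(R/I')$. This isomorphism preserves degrees, so for every $n\ge 0$ we obtain an isomorphism of $R$-modules
\[
\frac{J^n+I}{J^{n+1}+I}\cong\frac{J^n+I'}{J^{n+1}+I'} .
\]
I would stress this graded (not merely abstract ring) isomorphism, since it comes directly from the equality of initial ideals.

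Next I use that $J$ is $\mm$-primary. Then each quotient $R/(I+J^n)$ has finite length, and so does each graded piece above, which is annihilated by $J$ and is finitely generated over the Artinian ring $R/J$. Isomorphic modules have equal length, so
\[
\ell\!\left(\frac{J^n+I}{J^{n+1}+I}\right)=\ell\!\left(\frac{J^n+I'}{J^{n+1}+I'}\right)\qquad\text{for all } n\ge 0 .
\]

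Finally I telescope, using additivity of length along the filtration $R=J^0+I\supseteq J^1+I\supseteq\cdots\supseteq J^n+I$:
\[
\ell\bigl(R/(I+J^n)\bigr)=\sum_{k=0}^{n-1}\ell\!\left(\frac{J^k+I}{J^{k+1}+I}\right).
\]
The same identity holds with $I'$ in place of $I$, and comparing the two sums termwise gives the claim. The case $n=0$ is trivial, since both sides are $0$.

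There is no serious obstacle here. The only point needing care is that the isomorphism $\gr_J(R/I)\cong\gr_J(R/I')$ must be degree-preserving, and the proof above secures this through $\inm(I')=\inm(I)$ rather than relying on an abstract isomorphism.
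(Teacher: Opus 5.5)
Your argument is correct and is exactly the derivation the paper intends. The paper states this corollary without a written proof, as an immediate consequence of Theorem~\ref{main}(iii), and you supply the standard step: $\inm(I')=\inm(I)$ gives a degree-preserving isomorphism $\gr_J(R/I)\cong\gr_J(R/I')$, whose graded pieces have equal finite lengths that telescope to $\ell(R/(I+J^n))$.

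Insisting on the graded isomorphism, rather than on equality of the ideals $I+J^n$ and $I'+J^n$ (which can fail for large $n$), is the right point of care.
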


\begin{Corollary}[Achilles--Manaresi function; cf.\ {\cite[Cor. 3.11]{QNVT}}]\label{cor:AM}
$R/I$ and $R/I'$ have the same Achilles--Manaresi bivariate function with respect to $J$, hence the same multiplicity sequence $c_0(J),\dots,c_d(J)$. In particular, when $J$ is the Jacobian ideal, the Segre numbers are unchanged.
\end{Corollary}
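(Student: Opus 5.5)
The plan is to reduce everything to the equality of initial ideals in Theorem~\ref{main}(ii), rather than to the abstract isomorphism in (iii). The reason is that the Achilles--Manaresi function of $R/I$ with respect to $J$ depends only on the associated graded ring $G:=\gr_J(R/I)$, viewed as a graded algebra over its degree-zero part $R/(I+J)$, together with the filtration of $G$ by powers of the ideal $\mm G$. Concretely, I would use the description of the bivariate function as the Hilbert function of the bigraded ring
\[
\gr_{\mm G}(G)=\bigoplus_{u,v\ge 0}\frac{\mm^u G_v}{\mm^{u+1}G_v},
\]
so that its value at $(u,v)$ is the length of the $(u,v)$-component. The multiplicity sequence $c_0(J),\dots,c_d(J)$ is then extracted from the leading coefficients of the associated bivariate polynomial, and the Segre numbers are its special case.

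First, by Theorem~\ref{main}(ii) with $i=r$, we have $\inn(I')=\inn(I)$ as ideals of the same ring $\gr_J(R)$. Consequently
\[
\gr_J(R/I)=\gr_J(R)/\inn(I)=\gr_J(R)/\inn(I')=\gr_J(R/I'),
\]
and this is an identity of graded quotients of $\gr_J(R)$, not merely an abstract isomorphism. In particular the degree-zero components agree: $\inn(I)_0=(I+J)/J$ and $\inn(I')_0=(I'+J)/J$, so $I+J=I'+J$. Thus the base ring $R/(I+J)$ is the same for both quotients, and its maximal ideal is the image of $\mm$ in both cases.

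Second, since the two graded rings coincide together with the ideal generated by the image of $\mm$, the bigraded rings $\gr_{\mm G}(G)$ coincide component by component. Hence their bivariate Hilbert functions are equal; that is, $R/I$ and $R/I'$ have the same Achilles--Manaresi function. Because the multiplicities $c_i(J)$ are read off from the leading terms of this function, they agree as well.

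The step I expect to need care is the final remark about Segre numbers. Here one must interpret $J$ as a fixed ideal of $R$ (the Jacobian ideal with respect to which the numbers are computed), not as an ideal recomputed from $I'$. With that reading, the Segre numbers are exactly the $c_i(J)$ and are therefore unchanged. I would state this interpretation explicitly. The other point to check is that the Achilles--Manaresi definition in the literature is indeed intrinsic to $(G,\mm G)$, so that no data of $R/I$ beyond $\gr_J(R/I)$ enters.
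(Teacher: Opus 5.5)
Your argument is correct and follows the route the paper intends; the paper gives no separate proof and simply defers to \cite[Cor.~3.11]{QNVT}. As you say, $\inn(I')=\inn(I)$ from Theorem~\ref{main} makes $\gr_J(R/I)$ and $\gr_J(R/I')$ the same quotient of $\gr_J(R)$, and since the Achilles--Manaresi function is by definition the bigraded Hilbert function of $\gr_{\mm G}(G)$, the multiplicities $c_i(J)$ and (reading $J$ as a fixed ideal of $R$) the Segre numbers agree.
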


\begin{Corollary}[Rees algebra; cf.\ {\cite[Cor. 3.12]{QNVT}}]\label{cor:rees}
The following invariants and properties of $\Rees_J(R/I)$ are shared by $\Rees_J(R/I')$: the relation type $\rt$, the Castelnuovo--Mumford regularity $\reg$, Cohen--Macaulayness, and Gorensteinness. Moreover, $\ar_J(I')=\ar_J(I)$.
\end{Corollary}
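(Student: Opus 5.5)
The plan is to reduce everything to Theorem \ref{main}(iii). For the perturbation $I'$ of order $N$ it gives an isomorphism of graded rings $\gr_J(R/I')\cong\gr_J(R/I)$, obtained from the equality $\inn(I')=\inn(I)$ in $\gr_J(R)$, together with $\ar_J(I')=\ar_J(I)$. The last sentence of the corollary is therefore already proved. For the remaining invariants I would not compare the two Rees algebras directly, since they need not be isomorphic. Instead I would use the known facts that each listed invariant or property of $\Rees_J(R/I)$ is determined by the associated graded ring $G=\gr_J(R/I)$ together with numerical data of $R/I$ that are themselves readable from $G$. This is exactly the route of \cite[Cor. 3.12]{QNVT}; the only new input is the improved bound $N$.

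I would treat the items in the following order.

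\emph{Relation type.} Present $\Rees_J(R/I)$ and $\gr_J(R/I)=\Rees_J(R/I)/J\Rees_J(R/I)$ as quotients of polynomial rings over $R/I$ and over $R/(I+J)$, using the same generators of $J$. By the standard lemma, the maximal degree of a minimal generator of the defining ideal is the same for both presentations, so $\rt(\Rees_J(R/I))=\rt(\gr_J(R/I))$. The right-hand side is an invariant of the graded ring, so it is unchanged under $I\mapsto I'$.

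\emph{Regularity.} I would quote the equality $\reg\Rees_J(R/I)=\reg\gr_J(R/I)$, which follows from the two short exact sequences linking $\Rees$, $\Rees_+$ and $\gr$ (Ooishi, Trung). The right-hand side is again intrinsic to the isomorphism class of the graded ring.

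\emph{Cohen--Macaulayness.} I would use the Trung--Ikeda criterion. It states that $\Rees_J(R/I)$ is Cohen--Macaulay if and only if $\gr_J(R/I)$ is Cohen--Macaulay with $a(\gr_J(R/I))<0$, under a mild height assumption on $J(R/I)$ when relevant.

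\emph{Gorensteinness.} I would use Ikeda's criterion. It states that $\Rees_J(R/I)$ is Gorenstein if and only if $\gr_J(R/I)$ is Gorenstein with $a(\gr_J(R/I))=-2$, under an assumption $\operatorname{ht} J(R/I)\ge 2$.

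Cohen--Macaulayness, the Gorenstein property and the $a$-invariant are preserved by graded isomorphism, and $\dim \gr_J(R/I)=\dim R/I$, so the dimensions of $R/I$ and $R/I'$ also agree.

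The step I expect to be the main obstacle is checking that the side hypotheses in the Cohen--Macaulay and Gorenstein criteria transfer from $I$ to $I'$, namely the height conditions on $J(R/I)$ versus $J(R/I')$. My first attempt would be to express these heights through $G$: $\dim R/I=\dim G$, and $\dim R/(I+J)$ is the dimension of the degree-zero part $G_0=R/(I+J)$. Since the isomorphism $\gr_J(R/I')\cong\gr_J(R)/\inn(I')=\gr_J(R)/\inn(I)$ is an isomorphism of graded $\gr_J(R)$-algebras, it identifies $R/(I+J)$ with $R/(I'+J)$; in particular $I+J=I'+J$, which is clear directly since $\varepsilon_i\in J$. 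So the data $\dim R/(I+J)$ and $\dim R/I$ coincide for $I$ and $I'$. If the criteria need $\operatorname{ht}J(R/I)$ itself, which is not determined by these dimensions in a non-catenary or non-equidimensional setting, I would restrict the statement to the setting of \cite[Cor. 3.12]{QNVT} and invoke that corollary verbatim. In that setting its hypotheses involve only $\gr_J(R/I)$ and $\ar_J(I)$, and both agree for $I$ and $I'$ by Theorem \ref{main}.
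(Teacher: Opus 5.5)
Your reduction is the one the paper intends. The paper offers no argument beyond Theorem \ref{main}(iii) (that is, $\inn(I')=\inn(I)$, hence $\gr_J(R/I')\cong\gr_J(R/I)$ as graded $\gr_J(R)$-algebras, and $\ar_J(I')=\ar_J(I)$) and the pointer to Quy--Trung. Your treatment of $\rt$ (via $\rt\Rees_J(R/I)=\rt\gr_J(R/I)$) and of $\reg$ (via $\reg\Rees_J(R/I)=\reg\gr_J(R/I)$) is exactly what that pointer amounts to. The Cohen--Macaulay/Gorenstein part, however, contains one incorrect step and leaves one step open.

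\emph{The Cohen--Macaulay criterion is misquoted.} The equivalence ``$\Rees_J(R/I)$ is Cohen--Macaulay iff $\gr_J(R/I)$ is Cohen--Macaulay with $a<0$'' holds only when $R/I$ is itself Cohen--Macaulay. From $0\to\Rees_+\to\Rees\to R/I\to0$ and $0\to\Rees_+(1)\to\Rees\to\gr\to0$, a Cohen--Macaulay $\Rees$ only yields $\operatorname{depth}\gr_J(R/I)\ge\operatorname{depth}R/I$. That gives Cohen--Macaulayness of $\gr$ only when $R/I$ is Cohen--Macaulay. Cohen--Macaulay Rees algebras over non-Cohen--Macaulay local rings do exist (arithmetic Macaulayfications), and there $\gr$ cannot be Cohen--Macaulay. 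Since nothing in the corollary makes $R/I$ Cohen--Macaulay, use the actual Trung--Ikeda criterion. Let $\operatorname{ht}J(R/I)>0$, $d=\dim R/I$, and $G=\gr_J(R/I)$ with homogeneous maximal ideal $\mathfrak M$. Then $\Rees_J(R/I)$ is Cohen--Macaulay iff $[H^i_{\mathfrak M}(G)]_n=0$ for all $i<d$, $n\neq-1$, and $[H^d_{\mathfrak M}(G)]_n=0$ for $n\ge0$. This condition is still intrinsic to the graded ring $G$ and to $d=\dim G$, so your argument survives with it.

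\emph{The height hypotheses do transfer, so no retreat to the Quy--Trung setting is needed.} For a prime $P\supseteq I+J=I'+J$, put $S=R_P$. Colons, products and intersections commute with localization. Hence $f_1,\dots,f_r$ is $JS$-filter regular, with Loewy lengths $\le a_i$ and Artin--Rees numbers $\le\ar_J(f_1,\dots,f_i)$. The bound of Theorem \ref{main} is monotone in these data, so the bound for $S$ is $\le N$ and the theorem applies in $S$. Because $JS\subseteq PS$, this gives $\dim(R/I)_P=\dim\gr_{JS}(S/IS)=\dim\gr_{JS}(S/I'S)=\dim(R/I')_P$. Minimizing over all such $P$ yields $\operatorname{ht}J(R/I)=\operatorname{ht}J(R/I')$.

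With these two repairs your proof is complete, including Ikeda's criterion under $\operatorname{ht}\ge2$, whose hypotheses now visibly hold on both sides. It is also more explicit than the paper's, which is silent on these hypotheses. Relying on the Quy--Trung corollary would in any case require checking that its proof uses nothing beyond the $\gr$-isomorphism and the $\ar$-equality, since it is stated for their larger bound.
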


\section*{Acknowledgements} \sk
This work is supported by Vietnam National Program for the Development of Mathematics 2021-2030 under grant number B2027-CTT-02.
The author would like to thank Professor Pham Hung Quy for suggesting questions concerning explicit perturbation bounds.


\end{document}